\theoremstyle{thmrm}
\theoremstyle{plain}
\newtheorem{no}{Notation}[section]
\newtheorem{thm}{Theorem}[section]
\newtheorem{lemma}[thm]{Lemma}
\newtheorem{prop}[thm]{Proposition}
\newtheorem{cor}[thm]{Corollary}
\newtheorem{question}[thm]{Question}
\newtheorem{defn}[thm]{Definition}
\newtheorem{remark}[thm]{Remark}
\newtheorem{example}[thm]{Example}
\newtheorem*{ack}{Acknowledgement}
\newcommand{\Sym}{\operatorname{Sym} }
\newcommand{\op}{\operatorname{op} }
\newcommand{\GL}{\operatorname{GL} }
\newcommand{\Z}{\operatorname{Z} }
\newcommand{\Cl}{\operatorname{Cl} }
\newcommand{\Fix}{\operatorname{Fix}}
\newcommand{\Aut}{\operatorname{Aut} }
\newcommand{\ird}{\operatorname{ird} }
\newcommand{\Ker}{\operatorname{Ker} }
\newcommand{\Ann}{\operatorname{Ann} }
\numberwithin{equation}{section}
\begin{document}
	
	\title{Representations of skew braces}
	
	\author{Nishant Rathee}
\address{ DEPARTMENT OF MATHEMATICS, NATIONAL INSTITUTE OF TECHNOLOGY, TIRUCHIRAPPALLI-620015, INDIA}
\email{nishant@nitt.edu, monurathee2@gmail.com}		
	
	
	\author{Ayush Udeep}
	\address{Department of Mathematical Sciences, Indian Institute of Science Education and Research (IISER) Mohali, Sector 81, SAS Nagar, P O Manauli, Punjab 140306, India} 
	\email{udeepayush@gmail.com}
	
	\subjclass[2010]{16T25, 20C15}
	\keywords{Isoclinism; representation; skew left brace; Yang--Baxter equation}

\begin{abstract}
In this paper, we explore linear representations of skew left braces, which are known to provide bijective non-degenerate set-theoretical solutions to the Yang--Baxter equation that are not necessarily involutive.  A skew left brace $(A, \cdot, \circ)$ induces an action $\lambda^{\op}: (A, \circ) \to \Aut (A, \cdot)$, giving rise to the group $\Lambda_{A^{\op}} = (A, \cdot) \rtimes_{\lambda^{\op}} (A, \circ)$. We prove that if $A$ and $B$ are isoclinic skew left braces, then $\Lambda_{A^{\op}}$ and $\Lambda_{B^{\op}}$ are also isoclinic under some mild restrictions on the centers of the respective groups. Our key observation is that there is a one-to-one correspondence between the set of equivalence classes of irreducible representations of $(A, \cdot, \circ)$ and that of the group $\Lambda_{A^{\op}}$. We obtain a decomposition of the induced representation of the additive group $(A, \cdot)$ and of the multiplicative group $(A, \circ)$ corresponding to the regular representation of the group $\Lambda_{A^{\op}}$. As examples, we compute the dimensions of the irreducible representations for several skew left braces with prime power orders.
\end{abstract}
\maketitle

\section{Introduction}

The Yang--Baxter equation is a fundamental equation in mathematical physics that arises in the study of integrable systems, quantum information theory, exactly solvable models in statistical mechanics and quantum field theory. From a mathematical perspective, the equation has deep connections to knot theory, braid theory and Hopf algebras, to name a few. It has been an ongoing program to classify set-theoretical solutions to this ubiquitous equation. Interestingly, large families of set-theoretical solutions to this equation arise from algebraic structures. Rump \cite{WR07} introduced left braces as generalisations of Jacobson radical rings and proved that they give rise to involutive set-theoretical solutions to the Yang--Baxter equation. Later, Guarnieri and Vendramin \cite{GV17} extended this concept to skew left braces, which provide bijective non-degenerate set-theoretical solutions to the Yang--Baxter equation that are not necessarily involutive. 
\par 

Formally, a skew left brace is a triple \((A, \cdot, \circ)\), where \((A, \cdot)\) and \((A, \circ)\) are groups such that
$$
a \circ (b \cdot c) = (a \circ b) \cdot a^{-1} \cdot (a \circ c)
$$
 for all $a, b, c \in A$, where \(a^{-1}\) denotes the inverse of \(a\) in \((A, \cdot)\). In a recent work  \cite{LV23}, Letourmy and Vendramin introduced linear representations of skew left braces to investigate their Schur covers and lifting property of their annihilator extensions. Their motivation for defining representations of skew left braces can be traced back to \cite{Zhu}. Representations of skew left braces can be used to understand and classify these objects, and hence classify solutions to the Yang-Baxter equation. In addition, skew left braces are intriguing algebraic structures positioned between groups and rings, and hence it is of independent interest to explore how these objects act on vector spaces.
\par 

A skew left brace $(A, \cdot, \circ)$ gives a group homomorphism $\lambda: (A, \circ) \to \Aut (A, \cdot)$ given by 
$$\lambda_a(b)=a^{-1} \cdot (a \circ b)$$
 for all $a, b \in A$. This homomorphism, called the lambda map, captures the essential properties of the skew left brace. Given a skew left brace $(A, \cdot, \circ)$, we can define its opposite skew left brace $(A, \cdot^{\op}, \circ)$, where $(A, \cdot^{\op})$ is the opposite group of $(A, \cdot)$  \cite{KT20}. The lambda map $\lambda^{\op}$ of  $(A, \cdot^{\op}, \circ)$ satisfies the property that $\lambda^{\op}: (A, \circ) \rightarrow (A, \cdot)$ is also a group homomorphism. This gives the semi-direct product 
$$
\Lambda_{A^{\op}} = (A, \cdot) \rtimes_{\lambda^{\op}} (A, \circ).
$$
It has been shown in \cite{KT24} that representations of the group $\Lambda_{A^{\op}}$ play a crucial role in the representation theory of the skew left brace $(A, \cdot, \circ)$. In fact, a representation of $(A, \cdot, \circ)$ naturally gives a representation of the group $\Lambda_{A^{\op}}$, an observation credited to Letourmy and Vendramin. This leads to analogues of Maschke's and Clifford's theorem for skew left braces  \cite{KT24}. In this paper, we observe that there is a one-to-one correspondence between skew left brace representations of $(A, \cdot, \circ)$ and  group representations of $\Lambda_{A^{\op}}$. It turns out that the group $\Lambda_{A^{\op}} $ reflects many structural properties of the skew left brace $(A, \cdot, \circ)$. Since representations of various classes of groups are well-studied, we have an efficient tool to explore representations of skew left braces. 
\par

The paper is organised as follows. Section \ref{sec:prelims} introduces the necessary preliminaries on skew left braces. Section \ref{sec:properties} is dedicated to exploring the basic properties of the group $\Lambda_{A^{\op}}$. We explicitly compute the center and commutator of $\Lambda_{A^{\op}}$ (Proposition \ref{centersdp} and Theorem \ref{commutator1}). We prove that if $A$ and $B$ are isoclinic skew left braces, then the commutators of $\Lambda_{A^{\op}}$ and $\Lambda_{B^{\op}}$ are isomorphic (Theorem \ref{isocom}). Furthermore, $\Lambda_{A^{\op}}$ and $\Lambda_{B^{\op}}$ are also isoclinic under some mild restrictions on the centers of the respective groups (Theorem \ref{isoclinicsdp}). Section \ref{sec:Rep} focuses on the representation theory of skew left braces. Using Theorem \ref{thm:grouptoskewbrace}, we make a key observation that there is a one-to-one correspondence between the set of equivalence classes of irreducible representations of a skew left brace $(A, \cdot, \circ)$ and that of its corresponding group $\Lambda_{A^{\op}}$. We compute the dimensions of the irreducible representations for several skew left braces of prime power orders. In Theorem \ref{thm:regularrep}, we obtain a decomposition of the induced representation of the additive group $(A, \cdot)$ and of the multiplicative group $(A, \circ)$ corresponding to the regular representation of the group $\Lambda_{A^{\op}}$. In Section \ref{conjugacy and linearity}, we derive some bounds on the number of inequivalent irreducible representations of  a finite skew left brace (Theorem \ref{number of rep bound}). We conclude by discussing linearity of skew left braces.
\medskip

\section{Preliminaries on skew braces} \label{sec:prelims}
In this section, we revisit basic definitions on  skew left braces that we shall use in later sections (see \cite{GV17, KT20}). 

\begin{defn}
	A skew left brace is a  triple $(A,\cdot ,\circ)$, where $(A,\cdot)$ and $(A, \circ)$ are groups  such that
	$$a \circ (b \cdot c)=(a\circ b) \cdot a^{-1} \cdot (a \circ c)$$
	 for all $a,b,c \in A$, where $a^{-1}$ denotes the inverse of $a$ in $(A, \cdot)$. The groups $(A,\cdot)$ and $(A, \circ)$ are called the additive and the multiplicative groups of $(A,\cdot ,\circ)$, respectively.  Furthermore, if  $(A,\cdot)$ is an abelian group, then $(A,\cdot ,\circ)$ is called a left brace.
\end{defn}

The inverse of $a \in A$ in the multiplicative group $(A, \circ)$ will be denoted by $a^\dagger$.	Any group $(A, \cdot)$ can be turned into a skew left brace with the same  `$+$' and `$\circ$' operations, and is referred as a {\it trivial skew left brace}.
\par

Given a skew left brace $(A,\cdot ,\circ)$, it is a direct check to see that the map $\lambda: (A, \circ) \rightarrow \operatorname{Aut}(A, \cdot)$ defined by $\lambda_a(b) = a^{-1} \cdot (a \circ b)$ for $a,b \in A$, is a group homomorphism. The map $\lambda$ is called the {\it lambda map} of $(A,\cdot ,\circ)$.

\begin{defn}
Let $(A, \cdot, \circ)$ be a skew left brace. Then $(A, \cdot^{\op}, \circ)$ is also a skew left brace, called the opposite skew left brace of $A$, where $a \cdot^{\op} b := b \cdot a$ for all $a, b \in A$.
\end{defn}

It is worth noting that the lambda map  $\lambda^{\op}$ of $(A, \cdot^{\op}, \circ)$ is given by
$$
	\lambda^{\op}_a(b) = (a \circ b) \cdot a^{-1} = a \cdot \lambda_a(b) \cdot a^{-1},
$$
	and  $\lambda^{\op}:(A, \circ)\to \Aut(A, \cdot)$ is also a group homomorphism.

\begin{defn}
A skew left brace  $(A, \cdot, \circ)$ is called self-opposite if $(A, \cdot, \circ)$ and  $(A, \cdot^{\op}, \circ)$ are isomorphic as skew left braces.
\end{defn}

For instance, all left braces are self-opposite.

\begin{defn}
A subset $I$ of $A$ is called an ideal of $(A, \cdot, \circ)$ if it is a normal subgroup of both $(A,\cdot)$ and $(A,\circ)$, and $\lambda_a(I)\subseteq I$ for all $a\in A$. \par 
\end{defn}

It is clear that if $I$ is an ideal, then $(I,\cdot,\circ)$ is also a skew left brace. Further,  $a\cdot I = a\circ I$ for all $a\in A$, and hence we can form the quotient skew left brace $A/I$.  For a group $(G, \cdot)$, we denote its center and commutator subgroup by $\Z(G, \cdot)$ and $(G, \cdot)'$, respectively.

\begin{defn}
The annihilator of a skew left brace $(A, \cdot, \circ)$ is defined as
	$$\Ann(A) := \Ker(\lambda) \cap \Z(A,\cdot) \cap \Fix(\lambda)=\Ker(\lambda) \cap \Z(A,\cdot) \cap \Z(A, \circ),$$
	where $\Ker(\lambda)$ denotes the kernel of $\lambda$ and $\Fix(\lambda) = \{ x \in A \mid \lambda_a(x) = x \text{ for all } a \in A \}$
\end{defn}

Note that, $\Ann(A)$ is an ideal of $(A, \cdot, \circ)$.

\begin{defn}
	The commutator $A'$ of a skew left brace $(A, \cdot, \circ)$ is the subgroup of $(A, \cdot)$ generated by elements of the form $a \cdot b\cdot a^{-1} \cdot b^{-1}$ and $a \cdot \lambda_b(a^{-1})$ for all $a, b \in A$. The commutator $A'$ turns out to be an ideal of $(A, \cdot, \circ)$.
\end{defn}

\begin{no} 
Before proceeding further, we set some notations:
\begin{itemize}
\item When the context is clear, we denote a skew left brace $(A, \cdot, \circ)$ by $A$ and its opposite skew left brace by $A^{\op}$. 
\item The product $a \cdot b$ in $(A, \cdot, \circ)$ is denoted by $ab$.
\item If $\lambda$ is the lambda map of  $A$ and  $I$ an ideal of $A$, then the lambda map of $A/I$ is denoted by $\overline{\lambda}$.
		\item If $G$ and $H$ are groups and $\psi: G \to \Aut(H)$ a homomorphism, then the group operation in the semi-direct product $H \rtimes_{\psi} G$ is given by 
		$$(h_1, g_1)(h_2, g_2)=(h_1 \psi_{g_1}(h_2), g_1 g_2)$$
		for all $h_1, h_2 \in H$ and $g_1, g_2 \in G$. 
		\item If $A$ is a skew left brace, then we denote by 
		$$\Lambda_{A} = (A,\cdot)\rtimes_{\lambda}(A,\circ) \quad \textrm{and} \quad \Lambda_{A^{\op}} = (A,\cdot)\rtimes_{\lambda^{\op}}(A,\circ)$$ the semi-direct products induced by $\lambda$ and $\lambda^{\op}$, respectively.
\item If $G$ is a group and $g \in G$, then $\Cl^G(g)$ denote the conjugacy class of $g$ in $G$ and $k(G)$ denote the number of conjugacy classes of $G$.
\end{itemize}
\end{no}
\medskip

\section{Some properties of $\Lambda_{A}$ and $\Lambda_{A^{\op}}$} \label{sec:properties}

In this section, we explore some basic structural properties of $\Lambda_{A}$ and $\Lambda_{A^{\op}}$. Direct checks lead to the following results.

\begin{prop}\label{comm1}
Let $A=(A, \cdot, \circ)$ be a skew left brace. Then the following assertions hold:
\begin{enumerate}
\item	$A'$ is an ideal of $A$ and $(A, \circ)'$ is contained in $A'$.
\item If $I$ is an ideal of $A$ containing $A'$ and $J$ is a normal subgroup  of $(A, \circ)$, then $I \rtimes_{\lambda^{\op}} J$ is a normal subgroup of  $\Lambda_{A^{\op}}$.
\item The annihilators of $(A, \cdot, \circ)$ and $(A,  \cdot^{\op}, \circ)$ are isomorphic.
\item The commutators of $(A, \cdot, \circ)$ and $(A, \cdot^{\op}, \circ)$ are the same as sets.
\end{enumerate}
\end{prop}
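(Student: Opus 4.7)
The plan is to verify each item by a direct computation using the brace axioms together with the explicit formulas $a \circ b = a \cdot \lambda_a(b)$ and $\lambda^{\op}_a(b) = a \cdot \lambda_a(b) \cdot a^{-1}$. For (1), the ideal property of $A'$ is already built into its definition, so I only need $(A, \circ)' \subseteq A'$. I would argue that the generator $a \cdot \lambda_b(a^{-1}) \in A'$ forces $\lambda_b(a) \equiv a \pmod{A'}$ for all $a, b \in A$, and combined with $(A, \cdot)' \subseteq A'$ this says that $A/A'$ is a trivial skew left brace whose additive group is abelian; then $\overline{a \circ b} = \overline{a \cdot \lambda_a(b)} = \overline{a \cdot b}$ in $A/A'$ shows $(A/A', \circ)$ is abelian, hence $(A, \circ)' \subseteq A'$.

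For (2), I would first check that $\lambda^{\op}_b(I) \subseteq I$ whenever $I$ is a $\lambda$-invariant normal subgroup of $(A, \cdot)$, since $\lambda^{\op}_b(x) = b \cdot \lambda_b(x) \cdot b^{-1}$; this makes $I \rtimes_{\lambda^{\op}} J$ a subgroup. For its normality in $\Lambda_{A^{\op}}$, I would compute
$$(a, b)(i, j)(a, b)^{-1} = \bigl(a \cdot \lambda^{\op}_b(i) \cdot \lambda^{\op}_{j'}(a^{-1}),\ j'\bigr), \qquad j' := b \circ j \circ b^\dagger,$$
and then split the first coordinate as $\bigl(a \cdot \lambda^{\op}_b(i) \cdot a^{-1}\bigr)\bigl(a \cdot \lambda^{\op}_{j'}(a^{-1})\bigr)$. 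The left factor lies in $I$ by normality in $(A, \cdot)$ combined with $\lambda^{\op}_b(I) \subseteq I$, while the right factor lies in $A' \subseteq I$ because $a \cdot \lambda_{j'}(a^{-1})$ is a defining generator of $A'$ and conjugation by $j'$ preserves the normal subgroup $A'$.

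For (3), since $\Z(A, \cdot) = \Z(A, \cdot^{\op})$ and the multiplicative group is shared, it suffices to show $\Ker(\lambda) \cap \Z(A, \cdot) = \Ker(\lambda^{\op}) \cap \Z(A, \cdot)$, which is immediate from the fact that $\lambda^{\op}_a = \lambda_a$ whenever $a$ is central. The induced brace structures on the two annihilators coincide because $\cdot$ and $\cdot^{\op}$ agree on central elements. For (4), the ordinary commutators in $(A, \cdot^{\op})$ are inverses of those in $(A, \cdot)$, so they generate the same subgroup of the common underlying set. For the remaining generators, a short manipulation gives
$$a \cdot^{\op} \lambda^{\op}_b(a^{-1}) = b \cdot \lambda_b(a^{-1}) \cdot b^{-1} \cdot a.$$
Writing $\lambda_b(a^{-1}) \in a^{-1} \cdot A'$ using the generator $a \cdot \lambda_b(a^{-1}) \in A'$ and using normality of $A'$ in $(A, \cdot)$, this expression reduces modulo $A'$ to the multiplicative commutator $b \cdot a^{-1} \cdot b^{-1} \cdot a$, which is itself in $A'$. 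Applying the same argument to $A^{\op}$, using $(A^{\op})^{\op} = A$, yields the reverse containment.

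The main obstacle is item (4): one has to unwind $\cdot^{\op}$ and $\lambda^{\op}$ simultaneously and then repeatedly exploit the normality of $A'$ in $(A, \cdot)$ to simplify the resulting expression. The other three items should fall out essentially from the definitions, once the right substitutions are made.
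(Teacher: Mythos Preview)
Your argument is correct and is precisely the kind of verification the paper has in mind: the paper itself offers no proof beyond the sentence ``Direct checks lead to the following results,'' so there is no alternative approach to compare against.

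One small imprecision in your treatment of (2) is worth tightening. The right-hand factor $a \cdot \lambda^{\op}_{j'}(a^{-1}) = a \cdot j' \cdot \lambda_{j'}(a^{-1}) \cdot (j')^{-1}$ is not literally the $j'$-conjugate of the generator $a \cdot \lambda_{j'}(a^{-1})$; instead
\[
a \cdot \lambda^{\op}_{j'}(a^{-1}) \;=\; [a, j'] \cdot \bigl( j' \cdot (a \cdot \lambda_{j'}(a^{-1})) \cdot (j')^{-1} \bigr),
\]
so you also need the additive commutator $[a, j'] \in (A, \cdot)' \subseteq A'$ to conclude that this factor lies in $A'$. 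With that one extra observation (or, alternatively, by invoking your item (4) to recognise $\lambda^{\op}_{j'}(a^{-1}) \cdot a$ as a generator of $(A^{\op})' = A'$ and then conjugating by $a$), all four parts go through exactly as you outline.
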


\begin{remark}
In general, the commutators of $(A, \cdot, \circ)$ and $(A, \cdot^{\op}, \circ)$ need not be isomorphic as skew left braces.  For example, if $(A, \cdot, \circ)$ is the trivial skew left brace over the symmetric group $S_n$ for $n \ge 5$, then the skew left braces $(A, \cdot, \circ)$ and $(A, \cdot^{\op}, \circ)$ are not isomorphic. Since their commutators, $A'$  and $(A^{\op})'$, equal the alternating group $A_n$ as a set, it follows that they are not isomorphic as skew left braces. 
\end{remark}

\begin{prop} \label{prop:AcongAop}
If $A$ is any skew left brace, then $\Lambda_{A} \cong \Lambda_{A^{\op}}$.
\end{prop}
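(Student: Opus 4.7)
The plan is to construct an explicit bijection $\phi:\Lambda_{A^{\op}}\to\Lambda_{A}$ that absorbs the ``twist'' relating the two lambda maps. Recall that $\lambda^{\op}_a(b)=a\cdot\lambda_a(b)\cdot a^{-1}$, so $\lambda^{\op}$ differs from $\lambda$ by an inner automorphism of $(A,\cdot)$ indexed by the multiplicative component. This suggests that shifting the first coordinate by the second coordinate should convert a $\lambda^{\op}$-semidirect product into a $\lambda$-semidirect product. Concretely, I propose
$$\phi(b,a) \;=\; (b\cdot a,\;a), \qquad \text{with set-theoretic inverse } (c,a)\mapsto (c\cdot a^{-1},a),$$
so bijectivity is immediate.

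The main step is verifying that $\phi$ is a homomorphism, i.e.\ that
$$\phi\bigl((b_1,a_1)\cdot_{\lambda^{\op}}(b_2,a_2)\bigr) \;=\; \phi(b_1,a_1)\cdot_{\lambda}\phi(b_2,a_2).$$
The second coordinates on both sides equal $a_1\circ a_2$, so everything reduces to comparing first coordinates. Using $\lambda^{\op}_{a_1}(b_2)=(a_1\circ b_2)\cdot a_1^{-1}$, the left-hand side simplifies to $b_1\cdot(a_1\circ b_2)\cdot a_1^{-1}\cdot(a_1\circ a_2)$. For the right-hand side one must expand $\lambda_{a_1}(b_2\cdot a_2)$, and this is where the skew brace axiom
$$a_1\circ(b_2\cdot a_2)=(a_1\circ b_2)\cdot a_1^{-1}\cdot(a_1\circ a_2)$$
is applied; dividing by $a_1$ on the left gives the same expression, yielding the equality.

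The main obstacle — really the only conceptual point — is spotting the correct twist: once one notices that $\lambda^{\op}_a$ is $\lambda_a$ conjugated by $a$, the candidate $\phi(b,a)=(ba,a)$ essentially writes itself, and the brace identity is precisely what is needed to make the calculation close. No further technical difficulty arises; the computation is a short direct check, so the proof should fit in a few lines.
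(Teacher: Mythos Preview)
Your proof is correct and is essentially the same as the paper's: the paper also defines the map $\psi(a,b)=(a\cdot b,\,b)$ from $\Lambda_{A^{\op}}$ to $\Lambda_A$, notes bijectivity, and checks the homomorphism property by the same computation (writing $\lambda^{\op}_{b}(x)=b\,\lambda_b(x)\,b^{-1}$ and then using $b^{-1}(b\circ b')=\lambda_b(b')$, which amounts to the skew brace identity you invoke). Aside from a swap in the names of the coordinates, your argument matches the paper's line by line.
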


\begin{proof}
Consider the map $\psi: \Lambda_{A^{\op}} \to \Lambda_{A}$ given by $\psi((a,b)) = (ab, b)$. Clearly, $\psi$ is a bijection. For $a_1, a_2, b_1, b_2 \in A$, we have
\begin{eqnarray*}
\psi((a_1, b_1)(a_2, b_2)) &=& \psi(a_1 \lambda^{\op}_{b_1}(a_2), b_1 \circ b_2)\\
 &=& \psi(a_1 \lambda^{\op}_{b_1}(a_2), b_1 \circ b_2)\\
 &=& (a_1 \lambda^{\op}_{b_1}(a_2) (b_1 \circ b_2), b_1 \circ b_2)\\
  &=& (a_1 b_1 \lambda_{b_1}(a_2) b_1^{-1} (b_1 \circ b_2), b_1 \circ b_2)\\
  &=& (a_1 b_1 \lambda_{b_1}(a_2)  \lambda_{b_1}(b_2), b_1 \circ b_2)\\
&=& (a_1 b_1 \lambda_{b_1}(a_2 b_2), b_1 \circ b_2)\\
&=&(a_1b_1, b_1) (a_2 b_2, b_2)\\
&=&\psi((a_1, b_1)) \psi((a_2, b_2)),
\end{eqnarray*}
and hence $\psi$ is an isomorphism of groups.
\end{proof}

Next, we compute the center of $\Lambda_{A^{\op}}$. Let $(x,y) \in \Z(\Lambda_{A^{\op}})$ and $(a, b) \in \Lambda_{A^{\op}}$. Then, we have
\begin{align}
	(x,y)(a,b)=&\;(a,b)(x,y)\nonumber\\
\Longleftrightarrow	(x \lambda^{\op}_{y}(a), y \circ b)= & \; (a  \lambda^{\op}_{b}(x), b \circ y)\nonumber\\
\Longleftrightarrow	(xy \lambda_y(a) y^{-1}, y \circ b)=& \;(ab \lambda_b(x) b^{-1}, b \circ y).\nonumber
\end{align}

Thus, $(x,y) \in \Z(\Lambda_{A^{\op}})$ if and only if $y \in \Z(A, \circ)$ and 
\begin{align} \label{cener1}
	xy \lambda_y(a) y^{-1} =& \; ab \lambda_b(x) b^{-1}
\end{align}
for all $a, b \in A$.

\begin{prop}\label{centersdp}
Let $A$ be any skew left brace. Then the following assertions hold:
\begin{enumerate}
\item $\Z(\Lambda_{A^{\op}}) \trianglelefteq \Fix(\lambda^{\op}) \rtimes_{\lambda^{\op}} \Z(A, \circ) \cong \Fix(\lambda^{\op}) \times \Z(A, \circ) $.
\item If $(x, y) \in \Z(\Lambda_{A^{\op}})$, then $\lambda_y(a)=y^{-1} x^{-1} a xy$ and $\lambda_a(y)= a^{-1} x^{-1} a xy$ for all $a \in A$.
\item The element $(x,1) \in \Z(\Lambda_{A^{\op}})$ if and only if $x \in \Z(A, \cdot) \cap \Fix(\lambda).$
\item The element  $(1,y) \in \Z(\Lambda_{A^{\op}})$ if and only if $y \in \Z(A, \circ) \cap \Fix(\lambda) \cap \Ker (\lambda^{\op})$.
\item $\Ann(A) \times \Ann(A) \trianglelefteq \Z(\Lambda_{A^{\op}})$.
\end{enumerate}
\end{prop}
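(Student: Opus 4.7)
The plan is to extract all five assertions from the single centrality condition~\eqref{cener1} derived just above the statement, together with the requirement $y \in \Z(A, \circ)$ already obtained there. The strategy is to specialize $a$ and $b$ in~\eqref{cener1} in turn and then assemble the resulting identities.

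First I would set $a = 1$ in~\eqref{cener1}: the left hand side collapses to $x$, forcing $x = b \, \lambda_b(x) \, b^{-1} = \lambda^{\op}_b(x)$ for every $b \in A$, hence $x \in \Fix(\lambda^{\op})$. Combined with $y \in \Z(A, \circ)$, this gives the containment $\Z(\Lambda_{A^{\op}}) \subseteq \Fix(\lambda^{\op}) \rtimes_{\lambda^{\op}} \Z(A, \circ)$; since $\lambda^{\op}_y$ fixes every element of $\Fix(\lambda^{\op})$ by the very definition of $\Fix$, this semidirect product is actually a direct product, proving assertion (1). Setting $b = 1$ in~\eqref{cener1} gives $xy \, \lambda_y(a) \, y^{-1} = ax$ and hence $\lambda_y(a) = y^{-1} x^{-1} a x y$, the first identity of (2). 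For the second identity, I would exploit $y \in \Z(A, \circ)$ to write $\lambda_a(y) = a^{-1}(a \circ y) = a^{-1}(y \circ a) = a^{-1} y \, \lambda_y(a)$ and then substitute the first formula.

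Assertions (3) and (4) are specializations of (2). Putting $y = 1$ in the first formula of (2) reduces it to $a = x^{-1} a x$, so $x \in \Z(A, \cdot)$; combined with $x \in \Fix(\lambda^{\op})$ and the identity $\lambda^{\op}_a(x) = a \, \lambda_a(x) \, a^{-1}$, this forces $\lambda_a(x) = x$, i.e.\ $x \in \Fix(\lambda)$. The converse is a direct substitution into~\eqref{cener1}. For (4), setting $x = 1$ in (2) yields $\lambda_y(a) = y^{-1} a y$ and $\lambda_a(y) = y$, giving $y \in \Fix(\lambda)$ and $\lambda^{\op}_y = \Id$ at once; conversely, when $y \in \Z(A, \circ) \cap \Fix(\lambda) \cap \Ker(\lambda^{\op})$, both products $(1,y)(a,b)$ and $(a,b)(1,y)$ collapse to $(a, b \circ y)$.

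Assertion (5) will then follow from the factorization $(x,y) = (x,1)(1,y)$ in $\Lambda_{A^{\op}}$ together with (3) and (4). For $x, y \in \Ann(A) = \Ker(\lambda) \cap \Z(A, \cdot) \cap \Z(A, \circ)$, the element $(x,1)$ is central by (3); for $(1,y)$ one observes that $\lambda^{\op}_y(a) = y \, \lambda_y(a) \, y^{-1} = y a y^{-1} = a$ whenever $y \in \Ker(\lambda) \cap \Z(A, \cdot)$, so $y \in \Ker(\lambda^{\op})$ and (4) applies. Normality is automatic since the subgroup sits inside the center. The one point requiring care throughout is keeping the two operations and the two lambda maps straight: it is the fixed points of $\lambda^{\op}$, not of $\lambda$, that govern the first coordinate of a central element, and translating between $\Fix(\lambda^{\op})$, $\Fix(\lambda)$, $\Ker(\lambda)$ and $\Ker(\lambda^{\op})$ is the only genuine bookkeeping hurdle.
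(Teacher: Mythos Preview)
Your argument is correct and follows the paper's own route: both proofs extract every assertion by specializing the single relation~\eqref{cener1} (setting $a=1$, $b=1$, $y=1$, $x=1$ in turn) together with the identity $y\lambda_y(a)=a\lambda_a(y)$ for $y\in\Z(A,\circ)$. The only cosmetic difference is in part~(5), where the paper verifies~\eqref{cener1} directly for $x,y\in\Ann(A)$ while you factor $(x,y)=(x,1)(1,y)$ and invoke (3) and (4); both are immediate.
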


\begin{proof}
The proofs are straightforward.
	\begin{enumerate}
		\item If $(x, y) \in \Z(\Lambda_{A^{\op}})$, then putting $a=1$ in \eqref{cener1} gives $\lambda_b(x) = b^{-1}xb$ for all $b \in A$.
		\item Putting $b=1$ in \eqref{cener1} gives $\lambda_y(a) = y^{-1} x^{-1} a xy$ for all $a \in A$. Further, using $y \lambda_y(a) = a \lambda_a(y)$ gives $\lambda_a(y)= a^{-1} x^{-1} a xy$ for all $a \in A$.
		\item This follows by putting $y=b=1$ in \eqref{cener1}.
		\item This follows by putting $x=1$ in $(2)$ and $(3)$.
		\item It can be easily seen that \eqref{cener1} holds for all $x, y \in \Ann(A)$.
	\end{enumerate}
\end{proof}

\begin{cor}
If $A$ is a left brace, then 
	\[ \Z(\Lambda_{A^{\op}}) = \Fix(\lambda) \rtimes_{\lambda^{\op}} \Ann(A) \cong \Fix(\lambda) \times \Ann(A). \]
\end{cor}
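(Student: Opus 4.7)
The plan is to specialize the structural statements of Proposition \ref{centersdp} to the case where $(A,\cdot)$ is abelian, and read off the center. The key simplification is that in a left brace we have $\lambda^{\op}_a(b) = a\lambda_a(b)a^{-1} = \lambda_a(b)$, so $\lambda^{\op}=\lambda$, and consequently $\Fix(\lambda^{\op})=\Fix(\lambda)$, $\Ker(\lambda^{\op})=\Ker(\lambda)$, and $\Z(A,\cdot) = A$.

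For the containment $\Z(\Lambda_{A^{\op}}) \subseteq \Fix(\lambda) \times \Ann(A)$, I would take $(x,y) \in \Z(\Lambda_{A^{\op}})$ and apply the parts of Proposition \ref{centersdp}. Part (1) immediately gives $x \in \Fix(\lambda^{\op}) = \Fix(\lambda)$ and $y \in \Z(A,\circ)$. The nontrivial content lies in showing $y \in \Ker(\lambda)\cap\Fix(\lambda)$, which is where I use commutativity of $(A,\cdot)$. Part (2) says $\lambda_y(a) = y^{-1}x^{-1}axy$ and $\lambda_a(y) = a^{-1}x^{-1}axy$ for all $a \in A$; since $(A,\cdot)$ is abelian, these collapse to $\lambda_y(a) = a$ and $\lambda_a(y) = y$. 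This gives $y \in \Ker(\lambda) \cap \Fix(\lambda)$, and combined with $y \in \Z(A,\circ)$ and $\Z(A,\cdot) = A$, we conclude $y \in \Ann(A)$ via the alternative description $\Ann(A) = \Ker(\lambda)\cap \Z(A,\cdot)\cap \Z(A,\circ)$.

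For the reverse containment, I would verify the centrality condition \eqref{cener1} directly for $x \in \Fix(\lambda)$ and $y \in \Ann(A)$: since $\lambda_y(a)=a$ (as $y\in\Ker\lambda$) and $\lambda_b(x)=x$ (as $x\in\Fix\lambda$), both sides of \eqref{cener1} reduce to $ax$ after collapsing via commutativity, while $y \in \Z(A,\circ)$ handles the multiplicative component. This shows $(x,y) \in \Z(\Lambda_{A^{\op}})$.

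Finally, to upgrade the set equality to the semi-direct/direct product identification, I would observe that $\Fix(\lambda)$ is preserved by $\lambda^{\op}$-action of $\Ann(A)$ (in fact the action is trivial, since $\Ann(A) \subseteq \Ker(\lambda) = \Ker(\lambda^{\op})$), so $\Fix(\lambda)\rtimes_{\lambda^{\op}}\Ann(A)$ makes sense and coincides with the direct product $\Fix(\lambda)\times \Ann(A)$. I do not anticipate a real obstacle here — everything is essentially a bookkeeping exercise given Proposition \ref{centersdp} — but the one point requiring care is to verify that $\Ann(A)$ is actually a subgroup of $(A,\circ)$ on which $\lambda^{\op}$ acts (trivially) on $\Fix(\lambda)$, so that the semi-direct product notation is meaningful.
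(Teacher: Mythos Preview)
Your proposal is correct and follows essentially the same route as the paper: both arguments specialize equation \eqref{cener1} to the abelian case to obtain $x\lambda_y(a)=a\lambda_b(x)$, use Proposition \ref{centersdp} to extract $x\in\Fix(\lambda)$ and $y\in\Z(A,\circ)$, and then collapse the remaining identities via commutativity to force $y\in\Ker(\lambda)$ and hence $y\in\Ann(A)$. Your invocation of part (2) of Proposition \ref{centersdp} is slightly more explicit than the paper's terse appeal to part (1) alone, but the content is the same.
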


\begin{proof}
Since $A$ is a left brace, by \eqref{cener1}, we have that $(x, y) \in \Z(\Lambda_{A^{\op}})$ if and only if
	\begin{align*} 
		x \lambda_y(a) = a \lambda_b(x)
	\end{align*}
for all $a, b \in A$. Let $(x, y) \in \Fix(\lambda) \times \Ann(A)$ and $a, b \in A$. Then we have
$$
a \lambda_b(x) = ax \quad \text{and} \quad 	x \lambda_y(a) = xa. 
$$
This shows that $\Fix(\lambda) \times \Ann(A) \leq \Z(\Lambda_{A^{\op}})$. Conversely, let $(x, y) \in \Z(\Lambda_{A^{\op}})$. By Proposition \ref{centersdp}(1) and the fact that $y \in \Z(A, \circ)$, it follows that $y \in \Ann(A)$, and hence $(x, y) \in \Fix(\lambda) \times \Ann(A)$. 
\end{proof}

\begin{remark}
	It is easy to see that $\Lambda_{A^{\op}}/ (\Ann(A) \times \Ann(A)) \cong A/\Ann(A) \rtimes_{\overline{\lambda^{\op}}} A/\Ann(A)$ under the natural map $\overline{(a,b)} \mapsto (\overline{a}, \overline{b})$ for all $a, b \in A$.	The group $ A/\Ann(A) \rtimes_{\overline{\lambda^{\op}}} A/\Ann(A)$ will be denoted by $\overline{\Lambda}_{A^{\op}}$.
\end{remark}

\begin{thm}\label{commutator1}
If $\Lambda_{A^{\op}}'$ is the commutator subgroup of $\Lambda_{A^{\op}}$, then  $\Lambda_{A^{\op}}'=A' \rtimes_{\lambda^{\op}} (A, \circ)'$.  
\end{thm}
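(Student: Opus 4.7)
The plan is to prove the two inclusions separately. That $A' \rtimes_{\lambda^{\op}} (A, \circ)'$ is a subgroup of $\Lambda_{A^{\op}}$ is immediate from Proposition \ref{comm1}(2).

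For $\Lambda_{A^{\op}}' \subseteq A' \rtimes_{\lambda^{\op}} (A, \circ)'$, I would exhibit a homomorphism from $\Lambda_{A^{\op}}$ onto an abelian group with the correct kernel. Specifically, consider
$$\phi: \Lambda_{A^{\op}} \longrightarrow (A, \cdot)/A' \, \times \, (A, \circ)/(A, \circ)', \qquad (a,b) \longmapsto \bigl(aA',\, b\,(A, \circ)'\bigr).$$
Since the codomain is abelian, it suffices to check that $\phi$ is a group homomorphism. The key point is that $\lambda^{\op}$ descends to the trivial action on $(A, \cdot)/A'$: since $(A, \cdot)' \subseteq A'$, the quotient $(A, \cdot)/A'$ is abelian, and since $a \lambda_b(a^{-1}) \in A'$ we have $\lambda_b(a) \equiv a \pmod{A'}$; combining these gives $\lambda^{\op}_b(a) = b \lambda_b(a) b^{-1} \equiv a \pmod{A'}$. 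Multiplicativity of $\phi$ then follows from the semi-direct product formula, and $\Lambda_{A^{\op}}' \subseteq \ker \phi = A' \rtimes_{\lambda^{\op}} (A, \circ)'$.

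For the reverse inclusion, since $\Lambda_{A^{\op}}'$ is a subgroup, it is enough to produce generators of $A' \rtimes_{\lambda^{\op}} (A, \circ)'$ as commutators or conjugates of commutators. Three direct computations in $\Lambda_{A^{\op}}$ supply the material:
\begin{align*}
[(a_1, 1), (a_2, 1)] &= (a_1 a_2 a_1^{-1} a_2^{-1},\, 1),\\
[(1, b_1), (1, b_2)] &= (1,\, b_1 \circ b_2 \circ b_1^{\dagger} \circ b_2^{\dagger}),\\
[(a, 1), (1, b)] &= (a \lambda^{\op}_b(a^{-1}),\, 1).
\end{align*}
The first line places every additive commutator $([a_1, a_2]_{\cdot}, 1)$ in $\Lambda_{A^{\op}}'$, and the second places $\{1\} \times (A, \circ)'$ in $\Lambda_{A^{\op}}'$.

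The main obstacle is the third line: $A'$ is generated in part by elements of the form $a \lambda_b(a^{-1})$, yet the natural commutator produces $a \lambda^{\op}_b(a^{-1})$ instead. I would resolve this mismatch by the identity
$$a \lambda^{\op}_b(a^{-1}) \cdot \bigl(a \lambda_b(a^{-1})\bigr)^{-1} \;=\; a \cdot \bigl[b,\, \lambda_b(a^{-1})\bigr] \cdot a^{-1},$$
whose right-hand side is the $(a,1)$-conjugate of the commutator $[(b,1),(\lambda_b(a^{-1}),1)] \in \Lambda_{A^{\op}}'$, and hence itself lies in $\Lambda_{A^{\op}}'$. This forces $(a \lambda_b(a^{-1}), 1) \in \Lambda_{A^{\op}}'$, and so every generator of $A' \times \{1\}$ is captured. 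Since $(a', 1)(1, b') = (a', b')$ for $a' \in A'$ and $b' \in (A, \circ)'$, the inclusion $A' \rtimes_{\lambda^{\op}} (A, \circ)' \subseteq \Lambda_{A^{\op}}'$ follows.
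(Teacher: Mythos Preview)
Your proof is correct. The approaches differ primarily in the forward inclusion: the paper computes a generic commutator $[(a_1,b_1),(a_2,b_2)]$ explicitly and then rewrites the first coordinate as a product of three factors, each visibly lying in $(A^{\op})'=A'$; you instead pass through the abelianisation map $\phi$, which is cleaner and avoids that manipulation entirely. For the reverse inclusion both proofs compute the same special commutators to place $(A,\cdot)'\times\{1\}$, $\{1\}\times(A,\circ)'$, and the elements $(a\lambda^{\op}_b(a^{-1}),1)$ inside $\Lambda'_{A^{\op}}$; the paper then simply asserts ``from this we conclude that $A'\times\{1\}\le\Lambda'_{A^{\op}}$'' (implicitly via $(A^{\op})'=A'$), whereas your identity $a\lambda^{\op}_b(a^{-1})\cdot\bigl(a\lambda_b(a^{-1})\bigr)^{-1}=a[b,\lambda_b(a^{-1})]a^{-1}$ makes this step fully explicit. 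Your route is slightly longer here but more self-contained, since it does not appeal to Proposition~\ref{comm1}(4).
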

\begin{proof}
It follows from Proposition \ref{comm1}(2) that \( A' \rtimes_{\lambda^{\op}} (A, \circ)' \) is a normal subgroup of \( \Lambda_{A^{\op}} \).  The inverse of $(a, b) \in\Lambda_{A^{\op}}$ is $(\lambda^{\op}_{b^\dagger}(a^{-1}), b^\dagger)$. For $a_1, a_2, b_1, b_2 \in A$, we have
	\begin{align*}
		(a_1, b_1)(a_2, b_2)(a_1, b_1)^{-1}(a_2, b_2)=\; &(a_1 \lambda^{\op}_{b_1}(a_2), b_1 \circ b_2)(\lambda^{\op}_{b^\dagger_1}(a_{1}^{-1}), b^\dagger_1) (\lambda^{\op}_{b^\dagger_2}(a_2^{-1}), b^\dagger_2)\\
		=\; &(a_1 \lambda^{\op}_{b_1}(a_2), b_1 \circ b_2)(\lambda^{\op}_{b^\dagger_1}(a_{1}^{-1}) \lambda^{\op}_{b^\dagger_1 \circ b^\dagger_2}(a_2^{-1}), b^\dagger_1 \circ b^\dagger_2 )\\
		=\;& (a_1 \lambda^{\op}_{b_1}(a_2) \lambda^{\op}_{b_1 \circ b_2 \circ b^\dagger_1}(a_{1}^{-1}) \lambda^{\op}_{b_1 \circ b_2 \circ b^\dagger_1 \circ b^\dagger_2}(a_2^{-1}), b_1 \circ b_2 \circ b^\dagger_1 \circ b^\dagger_2).
	\end{align*}
	
	We can write 
	\begin{align*}
		& a_1  \lambda^{\op}_{b_1}(a_2) \lambda^{\op}_{b_1 \circ b_2 \circ b^\dagger_1}(a_{1}^{-1}) \lambda^{\op}_{b_1 \circ b_2 \circ b^\dagger_1 \circ b^\dagger_2}(a_2^{-1})\\
		&=\;\big(a_1 \lambda^{\op}_{b_1 \circ b_2 \circ b^\dagger_1}(a_{1}^{-1}) \big) \big(\lambda^{\op}_{b_1 \circ b_2 \circ b^\dagger_1}(a_{1}) \lambda^{\op}_{b_1}(a_2) \lambda^{\op}_{b_1 \circ b_2 \circ b^\dagger_1}(a_{1}^{-1}) \lambda^{\op}_{b_1}(a_2^{-1}) \big)\big(\lambda^{\op}_{b_1}(a_2) \lambda^{\op}_{b_1 \circ b_2 \circ b^\dagger_1 \circ b^\dagger_2}(a_2^{-1}) \big).
	\end{align*}
	This shows that $a_1  \lambda^{\op}_{b_1}(a_2) \lambda^{\op}_{b_1 \circ b_2 \circ b^\dagger_1}(a_{1}^{-1}) \lambda^{\op}_{b_1 \circ b_2 \circ b^\dagger_1 \circ b^\dagger_2}(a_2^{-1}) \in (A^{\op})^\prime$. But, since $(A^{\op})^\prime=A^\prime$, we obtain $\Lambda^{\prime}_{A^{\op}} \leq A' \rtimes_{\lambda^{\op}} (A, \circ)'$.
\par

For the converse, since
	$$(a,b)(1, b)(a,b)^{-1}(1, b)^{-1} \in \Lambda'_{A^{\op}},$$ we obtain $((a\lambda^{\op}_{b}(a^{-1}), 1) \in \Lambda'_{A^{\op}}$ for all $a, b \in A$. Similarly, since  $$(a_1,1 )(a_2,1)(a_1,1)^{-1}(a_2, 1)^{-1} \in \Lambda'_{A^{\op}},$$ we obtain $(A, \cdot)' \times \{1\} \leq \Lambda'_{A^{\op}}$. From this we conclude that $A' \times \{1\} \le \Lambda'_{A^{\op}}$. It is easy to see that $\{1\} \times (A, \circ)' \leq \Lambda'_{A^{\op}}.$ This implies that $A' \rtimes_{\lambda^{\op}}(A, \circ)' \leq \Lambda'_{A^{\op}}$, and hence $A' \rtimes_{\lambda^{\op}}(A, \circ)' =\Lambda'_{A^{\op}}$.
\end{proof}

\begin{cor} \label{cor:LamAmodLamAprime}
	The group $\Lambda_{A^{\op}}/ \Lambda^{\prime}_{A^{\op}}$ is isomorphic to $A/A^\prime \times (A, \circ)/ (A, \circ)^\prime$.
\end{cor}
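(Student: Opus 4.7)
The plan is to produce a surjective homomorphism
$$\phi: \Lambda_{A^{\op}} \longrightarrow (A/A^\prime) \times ((A, \circ)/(A, \circ)^\prime)$$
given by $\phi(a,b) = (aA^\prime,\, b\,(A, \circ)^\prime)$, and then identify its kernel with $\Lambda^{\prime}_{A^{\op}}$ via Theorem \ref{commutator1}.

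First I would check that $\phi$ is a homomorphism. Surjectivity is obvious, and the second coordinate is immediate since $\overline{b_1 \circ b_2} = \bar b_1 \bar b_2$ in $(A,\circ)/(A,\circ)^\prime$. The only non-trivial point is the first coordinate, where one has to show
$$\overline{a_1 \lambda^{\op}_{b_1}(a_2)} = \bar a_1\, \bar a_2 \quad \text{in } A/A^\prime.$$
For this I would note that, by the definition of $A^\prime$, both $(A,\cdot)^\prime \le A^\prime$ and $a \lambda_b(a^{-1}) \in A^\prime$ for all $a,b \in A$; consequently $A/A^\prime$ is abelian and $\overline{\lambda_b(a)} = \bar a$. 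Combined with the identity $\lambda^{\op}_b(a) = b\, \lambda_b(a)\, b^{-1}$, this yields $\overline{\lambda^{\op}_b(a)} = \overline{\lambda_b(a)} = \bar a$, which is exactly what is needed. This is the one substantive step of the argument; the rest is bookkeeping.

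Next I would compute $\Ker(\phi)$. By definition, $(a,b) \in \Ker(\phi)$ iff $a \in A^\prime$ and $b \in (A,\circ)^\prime$. Thus as a set $\Ker(\phi) = A^\prime \times (A,\circ)^\prime$, which by Theorem \ref{commutator1} coincides with $A^\prime \rtimes_{\lambda^{\op}} (A,\circ)^\prime = \Lambda^{\prime}_{A^{\op}}$. Applying the first isomorphism theorem then gives the claimed isomorphism $\Lambda_{A^{\op}}/\Lambda^{\prime}_{A^{\op}} \cong (A/A^\prime) \times ((A,\circ)/(A,\circ)^\prime)$.

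The main obstacle, as indicated, is verifying that the twist $\lambda^{\op}$ becomes trivial after passing to $A/A^\prime$. Everything else is formal. I would also briefly observe (for completeness) that $A^\prime \rtimes_{\lambda^{\op}} (A,\circ)^\prime$ is indeed a well-defined subgroup, which follows from $A^\prime$ being an ideal of $A$ (Proposition \ref{comm1}(1)) and hence $\lambda^{\op}$-invariant via $\lambda^{\op}_b(a) = b \lambda_b(a) b^{-1}$ together with the normality of $A^\prime$ in $(A,\cdot)$.
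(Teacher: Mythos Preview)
Your proposal is correct and follows essentially the same route as the paper: the paper simply asserts that the map $\Lambda_{A^{\op}}/\Lambda'_{A^{\op}} \to A/A' \times (A,\circ)/(A,\circ)'$ given by $\overline{(a,b)} \mapsto (\bar a, \bar b)$ is an isomorphism, while you construct the same map at the level of $\Lambda_{A^{\op}}$, verify it is a homomorphism, and invoke the first isomorphism theorem together with Theorem~\ref{commutator1}. The only substantive content in either version is the observation that $\lambda^{\op}$ becomes trivial modulo $A'$, which you spell out explicitly and the paper leaves to the reader.
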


\begin{proof}
	It is easy to see that the map $\Lambda_{A^{\op}} / \Lambda'_{A^{\op}}  \to A/A' \times (A, \circ) / (A, \circ)'$ given by
$\overline{(a,b)}  \mapsto (\overline{a}, \overline{b})$ is an isomorphism of groups.
 \end{proof}

The notion of isoclinism of groups was introduced by Hall in \cite{MR0003389}  to facilitate the classification of prime power groups.

\begin{defn}
Two groups $G$ and $H$ are said to be isoclinic if there exist group isomorphisms $\xi_1: G/ \Z(G) \to H/ \Z(H)$ and $\xi_2: G' \to H'$ such that the diagram
	\begin{align}\label{cdsb2}
		\begin{CD}
\big(G/ \Z(G)\big) \times \big(G/ \Z(G)\big)@>{\eta_G}>> G' \\
@V{\xi_1 \times \xi_1}VV @V{\xi_2}VV\\
\big(H/ \Z(H)\big) \times \big(H/ \Z(H)\big)@>{\eta_H}>> H'
		\end{CD}	
	\end{align}
 commutes. Here, the maps $\eta_G: (G/ \Z(G)) \times (G/ \Z(G)) \rightarrow G' $ and $\eta_H: (H/ \Z(H)) \times (H/ \Z(H)) \rightarrow H' $ are defined by 
	$$\eta_G \big(\overline{g_1}, \overline{g_2} \big)=g_1g_2g_1^{-1}g_2^{-1} \text{ and }  \eta_H \big(\overline{h_1}, \overline{h_1} \big)=h_1h_2h_1^{-1}h_2^{-1}$$
for all $g_1, g_1 \in G$ and $h_1, h_2 \in H$.
\end{defn}

 We need the following result of Hall \cite{MR0003389}.

\begin{thm}\label{gpiso}
Let $G$ be a group and $N \trianglelefteq G$. Then $G$ is isoclinic to $G/N$ if and only if $N \cap G'=\{1\}$.
\end{thm}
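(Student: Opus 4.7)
The plan is to prove both implications by using the natural projection $\pi \colon G \to G/N$. The backward direction is constructive, producing the isomorphisms $\xi_1$ and $\xi_2$ directly from $\pi$, while the forward direction reduces to a cardinality comparison between $G'$ and $(G/N)'$.

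For the backward implication, I would assume $N \cap G' = \{1\}$ and first record the key structural consequence $N \subseteq \Z(G)$: for any $n \in N$ and $g \in G$, the commutator $[n, g]$ lies in $N$ by normality and in $G'$ by definition, hence in $N \cap G' = \{1\}$. Next I would identify $\Z(G/N) = \Z(G)/N$: the inclusion $\supseteq$ is immediate from $N \subseteq \Z(G)$, and for $\subseteq$, if $gN \in \Z(G/N)$ then $[g, h] \in N \cap G' = \{1\}$ for every $h \in G$, forcing $g \in \Z(G)$. With this I would define $\xi_1 \colon G/\Z(G) \to (G/N)/\Z(G/N)$ by $g\Z(G) \mapsto \pi(g)\Z(G/N)$, which is well-defined and bijective by the previous step, and $\xi_2 \colon G' \to (G/N)'$ as the restriction of $\pi$, which is injective because $\ker \pi \cap G' = N \cap G' = \{1\}$ and surjective because $(G/N)'$ is generated by commutators of the form $[\pi(g), \pi(h)] = \pi([g, h])$. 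A direct computation then verifies commutativity of the diagram, since both compositions send $(g\Z(G),\, h\Z(G))$ to $\pi([g, h])$.

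For the forward implication, I would assume $G$ is isoclinic to $G/N$ via some pair $\xi_1, \xi_2$. The mere existence of an abstract isomorphism $\xi_2 \colon G' \to (G/N)'$ implies $|G'| = |(G/N)'|$. On the other hand, the restriction of $\pi$ to $G'$ is a surjection onto $(G/N)'$ with kernel $G' \cap N$, giving $(G/N)' \cong G'/(G' \cap N)$ and hence $|(G/N)'| = |G'|/|G' \cap N|$. In the finite setting relevant to the paper, the equality $|G'| = |G'|/|G' \cap N|$ immediately forces $G' \cap N = \{1\}$.

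The main obstacle is the identification $\Z(G/N) = \Z(G)/N$, which is the precise point where the hypothesis $N \cap G' = \{1\}$ does genuine work beyond the routine observation $N \subseteq \Z(G)$; everything else is bookkeeping built on the functorial identity $\pi([g, h]) = [\pi(g), \pi(h)]$.
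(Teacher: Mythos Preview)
The paper does not prove this theorem; it attributes the result to Hall and later invokes only the backward implication (in the proof of Theorem~\ref{isoclinicsdp}(2)). There is thus no in-paper argument to compare against.

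Your backward implication is correct and standard: the hypothesis $N\cap G'=\{1\}$ forces $N\subseteq\Z(G)$ and then $\Z(G/N)=\Z(G)/N$, after which the natural projection supplies both $\xi_1$ and $\xi_2$ and the diagram commutes on the nose. This already covers everything the paper actually uses.

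Your forward implication is correct for finite $G$, as you flag, but the cardinality argument cannot be extended to infinite groups, and in fact the statement as literally written is false there. Take $G=\bigoplus_{i\ge 1} H$ for any nonabelian group $H$ and let $N$ be the first summand; then $G/N\cong G$, so $G$ is isoclinic to $G/N$, yet $N\cap G'=H'\ne\{1\}$. The version that holds in full generality is: \emph{the canonical projection $G\to G/N$ induces an isoclinism if and only if $N\cap G'=\{1\}$}. Your backward argument already proves the ``if'' of this refined statement, and its ``only if'' is immediate without any cardinality count: if $\pi|_{G'}$ is the map $\xi_2$ and is injective, then $N\cap G'=\ker(\pi|_{G'})=\{1\}$.
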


In \cite{LV24}, Letourmy and Vendramin introduced isoclinism of skew left braces and investigated properties  that are invariant under
this equivalence.

\begin{defn}
Two skew left braces $A$ and $B$ are said to be isoclinic if there exist skew brace isomorphisms $\xi_1: A/ \Ann(A) \to B/ \Ann(B)$ and $\xi_2: A' \to B'$ such that the diagram
	\begin{align}\label{cdsb}
		\begin{CD}
			A'  @<{\theta_A}<<\big(A/ \Ann(A)\big) \times \big(A/ \Ann(A)\big)@>{\theta^{*}_A}>> A' \\
			@V{\xi_2}VV @V{\xi_1 \times \xi_1}VV @V{\xi_2}VV\\
			B' @<{\theta_B}<< \big(B/ \Ann(B)\big) \times \big(B/ \Ann(B)\big)@>{\theta^{*}_B}>> B'
		\end{CD}	
	\end{align}
 commutes. Here, the maps $\theta_A, \theta^*_A: (A/ \Ann(A)) \times (A/ \Ann(A)) \rightarrow A' $ are defined by 
	$$\theta_A(\overline{a}, \overline{b})=aba^{-1}b^{-1} \text{ and }  \theta^*_A(\overline{a}, \overline{b})=\lambda_a(b)b^{-1}$$
for all $a,b \in A.$ The maps $\theta_B$ and  $\theta^*_B$ are defined similarly.
\end{defn}

The following result is due to \cite[Proposition 3.10]{LV23}.

\begin{prop}
	If $A$ and $B$ are isoclinic skew left braces, then $(A, \cdot)$ is isoclinic to $(B, \cdot)$ and $(A, \circ)$ is isoclinic to $(B,\circ)$ as groups.
\end{prop}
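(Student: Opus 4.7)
The plan is to extract from the skew brace isoclinism data $\xi_1\colon A/\Ann(A)\to B/\Ann(B)$ and $\xi_2\colon A'\to B'$ two pairs of group isoclinism data: one for the additive groups $(A,\cdot)$ and $(B,\cdot)$, and one for the multiplicative groups $(A,\circ)$ and $(B,\circ)$. I would describe the additive case in detail; the multiplicative case proceeds along the same template once the right compatibility between $\xi_1$ and $\xi_2$ is recorded.

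For the additive case, the candidate pair consists of $\zeta_2^{+}:=\xi_2|_{(A,\cdot)'}\colon (A,\cdot)'\to (B,\cdot)'$ and the map $\zeta_1^{+}\colon (A,\cdot)/\Z(A,\cdot)\to (B,\cdot)/\Z(B,\cdot)$ induced by $\xi_1$; the containment $\Ann(A)\subseteq\Z(A,\cdot)$ makes both constructions possible in principle. Since $\Ann(A)$ is central in $(A,\cdot)$, the subgroup $(A,\cdot)'$ equals the image of $\theta_A$, and the commutative square of~\eqref{cdsb} involving $\theta$ yields $\xi_2((A,\cdot)')=(B,\cdot)'$, so $\zeta_2^{+}$ is a well-defined isomorphism. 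To see that $\zeta_1^{+}$ is well defined I would verify $\xi_1(\Z(A,\cdot)/\Ann(A))=\Z(B,\cdot)/\Ann(B)$ via the characterization that $\bar a\in \Z(A,\cdot)/\Ann(A)$ if and only if $\theta_A(\bar a,\bar x)=1$ for every $\bar x\in A/\Ann(A)$, transported across the same square. Commutator compatibility is then immediate: for lifts $\tilde b_i\in B$ of $\xi_1(\overline{a_i})$, $\xi_2([a_1,a_2])=\xi_2(\theta_A(\overline{a_1},\overline{a_2}))=\theta_B(\overline{\tilde b_1},\overline{\tilde b_2})=[\tilde b_1,\tilde b_2]$.

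For the multiplicative case, the same three steps apply, using $\Ann(A)\subseteq\Z(A,\circ)$ and the inclusion $(A,\circ)'\subseteq A'$ from Proposition~\ref{comm1}(1). The essential auxiliary fact I would establish first is the compatibility $\overline{\xi_2(x)}=\xi_1(\bar x)$ in $B/\Ann(B)$ for every $x\in A'$; this is obtained by checking the identity on the generating images of $\theta_A$ and $\theta_A^{*}$, where it is exactly the commutativity of the two squares of~\eqref{cdsb}, and then extending to arbitrary products in $A'$. Combined with the fact that $\xi_1$ is in particular an isomorphism of the multiplicative group $(A/\Ann(A),\circ)$ with $(B/\Ann(B),\circ)$, this compatibility yields the multiplicative analogues of $\zeta_1^{+}$ and $\zeta_2^{+}$.

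I expect the main obstacle to be the multiplicative commutator compatibility, because $[a,b]_\circ$ lies in $A'$ but is not directly in the image of $\theta_A$ or $\theta_A^{*}$, so the squares of~\eqref{cdsb} do not apply verbatim to pin down $\xi_2([a,b]_\circ)$. The bridge is precisely the compatibility $\overline{\xi_2(x)}=\xi_1(\bar x)$ noted above: it identifies $\xi_2([a,b]_\circ)$ with $[\tilde b_1,\tilde b_2]_\circ$ modulo $\Ann(B)$, and since $\Ann(B)\subseteq\Z(B,\circ)$ this modular identification is exactly what is required to make the commutator square in the group isoclinism of $(A,\circ)$ with $(B,\circ)$ commute.
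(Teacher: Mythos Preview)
The paper does not prove this proposition; it simply cites \cite[Proposition 3.10]{LV23}. So there is no in-paper proof to compare against, and I evaluate your argument on its own.

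Your treatment of the additive groups is correct: the left square of \eqref{cdsb} gives $\xi_2((A,\cdot)')=(B,\cdot)'$, the characterisation of $\Z(A,\cdot)/\Ann(A)$ via vanishing of $\theta_A$, and the commutator compatibility, exactly as you outline. The multiplicative case, however, has a real gap in the final step. From the compatibility $\overline{\xi_2(x)}=\xi_1(\bar x)$ you correctly obtain $\xi_2([a,b]_\circ)\equiv[\tilde b_1,\tilde b_2]_\circ\pmod{\Ann(B)}$, but the assertion that ``this modular identification is exactly what is required'' is wrong. Group isoclinism demands the \emph{exact} equality $\zeta_2^{\circ}([a,b]_\circ)=[\tilde b_1,\tilde b_2]_\circ$ in $(B,\circ)'$; the inclusion $\Ann(B)\subseteq\Z(B,\circ)$ only guarantees that the right-hand side is independent of the lifts $\tilde b_i$, it does not kill the $\Ann(B)$-discrepancy on the left. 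The same defect blocks your implicit claim that $\xi_2$ carries $(A,\circ)'$ onto $(B,\circ)'$, and the analogous characterisation of $\Z(A,\circ)/\Ann(A)$ is likewise not immediate from your compatibility relation. The repair---and this is essentially what \cite{LV23} does---is to express $[a,b]_\circ$ explicitly as a $\cdot$-product of values of $\theta_A$ and $\theta_A^{*}$ (iterating the identity $a\circ b=a\cdot\theta_A^{*}(\bar a,\bar b)\cdot b$); applying the $\cdot$-homomorphism $\xi_2$ and both squares of \eqref{cdsb} then yields the exact identity $\xi_2([a,b]_\circ)=[\tilde b_1,\tilde b_2]_\circ$, with no residual $\Ann(B)$-ambiguity.
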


\begin{thm}\label{isocom}
If $A$ and $B$ are  isoclinic skew left braces, then $\Lambda'_{A^{\op}} \cong \Lambda'_{B^{\op}}$.
\end{thm}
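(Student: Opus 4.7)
The strategy is to invoke Theorem \ref{commutator1}, which identifies $\Lambda'_{A^{\op}} = A' \rtimes_{\lambda^{\op}} (A, \circ)'$ and $\Lambda'_{B^{\op}} = B' \rtimes_{\lambda^{\op}} (B, \circ)'$, and to construct a group isomorphism by pushing forward along the skew brace isomorphism $\xi_2 \colon A' \to B'$ supplied by the isoclinism. Concretely, the candidate map is $\phi \colon \Lambda'_{A^{\op}} \to \Lambda'_{B^{\op}}$ defined by $\phi((a, b)) = (\xi_2(a), \xi_2(b))$; for this to be well-defined, $\xi_2$ must restrict to an isomorphism $(A, \circ)' \to (B, \circ)'$.

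First, I would verify that $\xi_2$ carries $(A, \circ)'$ onto $(B, \circ)'$. Since $(A, \circ)' \subseteq A'$ by Proposition \ref{comm1}(1), the restriction $\xi_2|_{(A, \circ)'}$ is at least defined as a map into $B'$. To land in $(B, \circ)'$, pick a typical generator $a \circ b \circ a^\dagger \circ b^\dagger$ with $a, b \in A$ and choose lifts $a', b' \in B$ of $\xi_1(\overline{a}), \xi_1(\overline{b})$ under the quotient $B \to B/\Ann(B)$. Using the identity $a \circ b = a \cdot \lambda_a(b)$ iteratively, the $\circ$-commutator may be expressed as a word in the two standard generator types of $A'$, namely $\cdot$-commutators $xyx^{-1}y^{-1}$ and $\lambda$-commutators $\lambda_x(y) y^{-1}$. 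The commutativity of the diagram (\ref{cdsb}) determines $\xi_2$ on each such generator, and reassembling the resulting expression in $B$ via the same identity produces $\xi_2(a \circ b \circ a^\dagger \circ b^\dagger) = a' \circ b' \circ a'^\dagger \circ b'^\dagger \in (B, \circ)'$. Surjectivity follows by applying the same argument to $\xi_2^{-1}$.

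Next, I would check that $\phi$ is a group homomorphism. Since $\xi_2$ is a skew brace isomorphism, it preserves $\cdot$, $\circ$, and hence the lambda map of $A'$; so for $a \in A'$ and $b \in (A, \circ)'$,
\[ \xi_2(\lambda^{\op}_b(a)) = \xi_2(b \cdot \lambda_b(a) \cdot b^{-1}) = \xi_2(b) \cdot \lambda_{\xi_2(b)}(\xi_2(a)) \cdot \xi_2(b)^{-1} = \lambda^{\op}_{\xi_2(b)}(\xi_2(a)). \]
This is precisely the cocycle compatibility ensuring that $\phi$ respects the semi-direct product multiplication, and $\phi$ is bijective because $\xi_2$ is.

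The main obstacle is the first step: translating the isoclinism data, which is formulated on the generators of $A'$ of $\cdot$- and $\lambda$-commutator type, into information about $\circ$-commutators. The reduction via $a \circ b = a \cdot \lambda_a(b)$ is in principle straightforward, but the careful bookkeeping needed to write an arbitrary $\circ$-commutator $a \circ b \circ a^\dagger \circ b^\dagger$ as an explicit product of the two standard generators of $A'$, and to verify that the image under $\xi_2$ reassembles into the corresponding $\circ$-commutator of lifts in $B$, is the technical heart of the proof.
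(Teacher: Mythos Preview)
Your proposal is correct and follows exactly the approach in the paper: invoke Theorem~\ref{commutator1} to write both commutator subgroups as semi-direct products, then define $\phi((a,b)) = (\xi_2(a), \xi_2(b))$ and check it is a group isomorphism using that $\xi_2$ is a skew brace isomorphism. The only difference is that the paper simply asserts that $\xi_2$ restricts to an isomorphism $(A,\circ)' \to (B,\circ)'$ (this is implicit in \cite[Proposition~3.10]{LV23}), whereas you outline the actual verification via the decomposition of $\circ$-commutators into $\cdot$- and $\lambda$-commutators.
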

\begin{proof}
Suppose that $A$ and $B$ are isoclinic skew left braces. Then we have a skew left brace isomorphism $\xi_2: A' \rightarrow B'$ such that $\xi_2|: (A, \circ)' \rightarrow (B, \circ)'$ is an isomorphism of groups, where $\xi_2|$ denotes the restriction of $\xi_2$ to $(A, \circ)'$. By Theorem \ref{commutator1}, we have $\Lambda'_{A^{\op}}=A' \rtimes_{\lambda^{\op}} (A, \circ)' $ and $\Lambda'_{B^{\op}}=B' \rtimes_{\lambda^{\op}} (B, \circ)' $. Define $\phi: \Lambda'_{A^{\op}} \rightarrow \Lambda'_{B^{\op}}$ by $\phi(a,b) = (\xi_2(a), \xi_2|(b))$ for all $a \in A'$ and $b \in (A, \circ)'$. Since $\xi_2$ is an isomorphism of skew left braces, it follows that $\phi$ is an isomorphism of groups, which is desired.
\end{proof}

\begin{thm}\label{isoclinicsdp}
	Let $A$ and $B$ be isoclinic skew left braces. Then $\Lambda_{A^{\op}}$ and $\Lambda_{B^{\op}}$ are isoclinic groups if any of the following conditions are satisfied:
	\begin{enumerate}
		\item $\Z(\Lambda_{A^{\op}}) = \Ann(A) \times \Ann(A)$ and $\Z(\Lambda_{B^{\op}}) = \Ann(B) \times \Ann(B)$,
		
		\item $\Ann(A) \cap A'=\{1\}$ and $\Ann(B) \cap B'=\{1\}$.
	\end{enumerate}
\end{thm}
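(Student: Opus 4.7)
The plan is to handle the two cases separately, reducing case (2) to Hall's theorem while working somewhat more directly in case (1). For case (2), I would first observe that $\Ann(A)\times\Ann(A)$ is a central, hence normal, subgroup of $\Lambda_{A^{\op}}$ by Proposition \ref{centersdp}(5). Combining Theorem \ref{commutator1} and Proposition \ref{comm1}(1) (which gives $(A,\circ)'\subseteq A'$), I would compute
\[
(\Ann(A)\times\Ann(A))\cap\Lambda'_{A^{\op}}\;=\;(\Ann(A)\cap A')\times(\Ann(A)\cap(A,\circ)'),
\]
and this is trivial under the assumption $\Ann(A)\cap A'=\{1\}$. Theorem \ref{gpiso} then yields $\Lambda_{A^{\op}}\sim\overline{\Lambda}_{A^{\op}}$, and analogously $\Lambda_{B^{\op}}\sim\overline{\Lambda}_{B^{\op}}$. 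Since $\xi_1:A/\Ann(A)\to B/\Ann(B)$ is a skew brace isomorphism, the map $(\overline{a},\overline{b})\mapsto(\xi_1(\overline{a}),\xi_1(\overline{b}))$ is a group isomorphism $\overline{\Lambda}_{A^{\op}}\cong\overline{\Lambda}_{B^{\op}}$, and transitivity of isoclinism concludes this case.

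For case (1), the hypothesis $\Z(\Lambda_{A^{\op}})=\Ann(A)\times\Ann(A)$ gives $\Lambda_{A^{\op}}/\Z(\Lambda_{A^{\op}})=\overline{\Lambda}_{A^{\op}}$, and likewise for $B$. I would define the candidate isoclinism pair directly: take $\Xi_1:(\overline{a},\overline{b})\mapsto(\xi_1(\overline{a}),\xi_1(\overline{b}))$ on the central quotient, and let $\Xi_2:\Lambda'_{A^{\op}}\to\Lambda'_{B^{\op}}$ be the isomorphism produced in Theorem \ref{isocom}. The only thing to verify is the commuting square from the definition of isoclinism, namely that for all $(a_i,b_i)\in\Lambda_{A^{\op}}$ and any lifts $\tilde a_i,\tilde b_i\in B$ of $\xi_1(\overline{a_i}),\xi_1(\overline{b_i})$,
\[
\Xi_2\bigl([(a_1,b_1),(a_2,b_2)]\bigr)=\bigl[(\tilde a_1,\tilde b_1),(\tilde a_2,\tilde b_2)\bigr].
\]

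Using the explicit commutator formula established in the proof of Theorem \ref{commutator1}, both sides are the \emph{same} universal word in their entries, so this reduces to two coordinate-wise identities. In the second coordinate one needs $\xi_2(b_1\circ b_2\circ b_1^\dagger\circ b_2^\dagger)=\tilde b_1\circ\tilde b_2\circ\tilde b_1^\dagger\circ\tilde b_2^\dagger$, which follows from the group isoclinism $(A,\circ)\sim(B,\circ)$ induced by the skew brace isoclinism (the proposition preceding Theorem \ref{isocom}) with isoclinism data restricted from $\xi_1,\xi_2$. In the first coordinate, I would rewrite each factor $\lambda^{\op}_y(a)$ using $\lambda^{\op}_y(a)=y\lambda_y(a)y^{-1}$ and decompose the whole expression into products of the standard generators $aba^{-1}b^{-1}$ and $\lambda_a(b)b^{-1}$ of $A'$; the commuting diagrams of $\xi_1,\xi_2$ with $\theta_A,\theta^*_A$ from the skew brace isoclinism then match each building block with its $B$-counterpart. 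The main obstacle is precisely this first-coordinate bookkeeping: expressing the $\lambda^{\op}$-heavy commutator in $\Lambda_{A^{\op}}$ in terms to which the $\theta$- and $\theta^*$-naturality of the skew brace isoclinism apply term-by-term.
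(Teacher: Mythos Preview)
Your proposal is correct and follows essentially the same route as the paper: case~(2) is handled identically (intersect $\Ann(A)\times\Ann(A)$ with $\Lambda'_{A^{\op}}=A'\rtimes(A,\circ)'$, invoke Hall's Theorem~\ref{gpiso}, then transport through the isomorphism $\overline{\Lambda}_{A^{\op}}\cong\overline{\Lambda}_{B^{\op}}$ induced by $\xi_1$), and in case~(1) both you and the paper take $\Xi_1=(\xi_1,\xi_1)$ on the central quotient and $\Xi_2=\xi_2\times\xi_2|$ on commutators. The only difference is that the paper asserts the commuting square ``follows from the commutativity of~\eqref{cdsb}'' without further elaboration, whereas you spell out the strategy of decomposing the first coordinate into $\theta_A$- and $\theta^*_A$-type factors; your more detailed bookkeeping is sound and is exactly what underlies the paper's one-line claim.
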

\begin{proof} 
	\begin{enumerate}
		\item In this case, we have $\Lambda_{A^{\op}}/ \Z(\Lambda_{A^{\op}}) \cong \overline{\Lambda}_{A^{\op}}$ and $\Lambda_{B^{\op}}/ \Z(\Lambda_{B^{\op}}) \cong \overline{\Lambda}_{B^{\op}}$. Since $A$ and $B$ are isoclinic skew left braces, there exist isomorphisms of skew left braces $\xi_1: A/ \Ann(A) \to B/ \Ann(B)$ and $\xi_2: A' \to B'$ such that the diagram \eqref{cdsb} commutes.
Since $\xi_1$ is an isomorphism of skew left braces, it follows that the map $\overline{\xi}_1: \overline{\Lambda}_{A^{\op}} \rightarrow \overline{\Lambda}_{B^{\op}}$ given by $\overline{\xi}_1(\overline{a}, \overline{b}) = (\xi_1(\overline{a}), \xi_1(\overline{b}))$ is an isomorphism of groups. Further, $\xi_2$ being an isomorphism of skew left braces implies that $\xi_2 \times \xi_2|: \Lambda_{A^{\op}}' \to \Lambda_{B^{\op}}'$ is an isomorphism of groups. The commutativity of the diagram
		$$\begin{CD}\label{cdgp}
			(\overline{\Lambda}_{A^{\op}}) \times (\overline{\Lambda}_{A^{\op}}) @>\eta_{\Lambda_{A^{\op}}}>> \Lambda_{A^{\op}}'\\
			@V{\overline{\xi}_1 \times \overline{\xi}_1}VV @V{\xi_2 \times \xi_2|}VV\\
			(\overline{\Lambda}_{B^{\op}}) \times (\overline{\Lambda}_{B^{\op}}) @>\eta_{\Lambda_{B^{\op}}}>> \Lambda^\prime_{B^{\op}}
		\end{CD}$$
follows from the commutativity of the diagram \eqref{cdsb}, which shows that $\Lambda_{A^{\op}}$ and $\Lambda_{B^{\op}}$ are isoclinic groups.
		
		\item It is easy to see that $\Ann(A) \cap A' =\{1\}$ and  $\Ann(B) \cap B' = \{1\}$ implies that $\Lambda'_{A^{\op}} \cap (\Ann(A) \times \Ann(A)) =\{1\}$ and $\Lambda'_{B^{\op}} \cap (\Ann(B) \times \Ann(B)) = \{1\}$. Hence, by Theorem \ref{gpiso}, we have that \( \Lambda_{A^{\op}} \) is isoclinic to \( \overline{\Lambda}_{A^{\op}} \) and \( \Lambda_{B^{\op}} \) is isoclinic to $\overline{\Lambda}_{B^{\op}} $.  Since $A$ and $B$ are isoclinic skew left braces, there exist an isomorphism $\xi_1: A/ \Ann(A) \to B/ \Ann(B)$  of skew left braces. 
This implies that $\overline{\Lambda}_{A^{\op}}$ is isomorphic to $\overline{\Lambda}_{B^{\op}}$, and hence $\Lambda_{A^{\op}} \) is isoclinic to \( \Lambda_{B^{\op}}$.
	\end{enumerate}
\end{proof}

\begin{remark}
Let $A$ be a trivial skew left brace with a non-abelian simple additive group, and let $B = A^{\op}$ its opposite skew left brace. It is easy to see that $A$ and $B$ are non-isoclinic skew left braces. However, $\Lambda_{A^{\op}} \cong \Lambda_{B^{\op}} \cong A \times A$ as groups. Thus, $\Lambda_{A^{\op}}$ and $\Lambda_{B^{\op}}$ are isoclinic as groups, but $A$ and $B$ are not isoclinic as skew left braces.
\end{remark}
\medskip

\section{Representations of skew left braces} \label{sec:Rep}
In  \cite{LV23}, Letourmy and Vendramin defined a representation of a skew left brace as follows.
		\begin{defn} A representation of a skew left brace $A$ is a triple $(V,\beta,\rho)$, where
		\begin{enumerate}[(1)]
			\item $V$ is a vector space over some field;
			\item $\beta : (A,\cdot)\longrightarrow \GL(V)$ is a representation of $(A,\cdot)$;
			\item $\rho: (A,\circ)\longrightarrow \GL(V)$ is a representation of $(A,\circ)$;
		\end{enumerate}
		such that the relation
		\begin{equation}\label{relation}
			\beta \big(\lambda^{\op}_a(b)\big) = \rho(a)\beta(b)\rho(a)^{-1}
		\end{equation}
		holds for all $a,b\in A$. 
	\end{defn}

\begin{prop} \label{prop:Rep1}
Let $A$ be a skew left brace and $\beta: A \rightarrow \GL(V)$ be a skew brace homomorphism considering $\GL(V)$ to be a trivial skew left brace. Then $(V,\beta,\beta)$ is a representation of $A$.
\end{prop}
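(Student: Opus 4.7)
The plan is a direct verification of the three defining conditions of a skew left brace representation. By hypothesis, $\beta : A \to \GL(V)$ is a skew left brace homomorphism and $\GL(V)$ carries the trivial skew brace structure, so the two operations on $\GL(V)$ both coincide with ordinary matrix multiplication. Consequently, $\beta$ is simultaneously a group homomorphism $(A,\cdot) \to \GL(V)$ and $(A,\circ) \to \GL(V)$ with the same underlying map, which immediately establishes conditions (1) and (2) of the definition of a representation upon taking $\rho = \beta$.

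It remains to verify the conjugation relation \eqref{relation} with $\rho = \beta$. The key step is to invoke the formula $\lambda^{\op}_a(b) = (a \circ b) \cdot a^{-1}$ recorded in Section \ref{sec:prelims}. Applying $\beta$ and using that it is a group homomorphism of $(A,\cdot)$ yields $\beta(\lambda^{\op}_a(b)) = \beta(a \circ b)\, \beta(a)^{-1}$; then using that $\beta$ is a homomorphism of $(A,\circ)$, one expands $\beta(a \circ b) = \beta(a)\beta(b)$, giving the desired identity $\beta(\lambda^{\op}_a(b)) = \beta(a)\beta(b)\beta(a)^{-1}$.

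There is no real obstacle here: the argument is essentially bookkeeping. The only subtle point worth flagging is that the hypothesis that $\GL(V)$ is viewed as a \emph{trivial} skew brace is exactly what permits $\beta(a \circ b)$ to factor as $\beta(a)\beta(b)$ under the same multiplication used for $\beta(a \cdot b)$; this collapse of the two operations is precisely what converts the skew brace homomorphism property into the sought conjugation relation.
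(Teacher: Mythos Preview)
Your proof is correct and follows essentially the same approach as the paper: both use that $\beta$ is a homomorphism for $(A,\cdot)$ and $(A,\circ)$, then compute $\beta(\lambda^{\op}_a(b)) = \beta((a\circ b)\cdot a^{-1}) = \beta(a\circ b)\beta(a)^{-1} = \beta(a)\beta(b)\beta(a)^{-1}$.
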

\begin{proof}
	Since $\beta$ is a skew brace homomorphism, we have $\beta(a\cdot b) = \beta(a) \beta(b)$ and $\beta(a\circ b) = \beta(a) \beta(b)$ for all $a, b \in A$. Then $$\beta\left( \lambda^{\op}_{a}(b) \right) = \beta((a\circ b) \cdot a^{-1}) = \beta(a\circ b)\beta(a)^{-1} = \beta(a)\beta(b)\beta(a)^{-1}$$
	for all $a, b \in A$, and hence $(V, \beta, \beta)$ is a representation of $A$.
\end{proof}

\begin{example} \label{prop:Rep2}
	Let $X$ be a set and $V$ be a vector space over a field with $X$ as its basis. Let $A$ be a skew left brace and $\eta: A \rightarrow \Sym(X)$ be a skew brace homomorphism considering $\Sym(X)$ to be a trivial skew brace. Then the map $\beta_{\eta}: A \rightarrow \GL(V)$ given by $\beta_{\eta}(a)=\eta(a)$ is a skew brace homomorphism considering $\GL(V)$ to be a trivial skew brace. Hence, $(V, \beta_{\eta}, \beta_{\eta})$ is a representation of $A$.
\end{example}

Kozakai and Tsang \cite[Remark 2.2]{KT24} established a one-way correspondence between representations of a skew left brace $A$ and that of the group $\Lambda_{A^{\op}}$ in the following way. 

	\begin{lemma} \label{lemma:repbracetogroup}
Let  $(V,\beta,\rho)$ be a representation of a skew left brace $A$. Then there is a representation $(V,\varphi_{(\beta,\rho)})$ of the group $\Lambda_{A^{\op}}$, where  $\varphi_{(\beta,\rho)} : \Lambda_{A^{\op}} \longrightarrow \GL(V)$ is given by $$\varphi_{(\beta,\rho)}(a,b) = \beta(a)\rho(b)$$
for all $a, b \in A$.
\end{lemma}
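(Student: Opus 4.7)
The plan is to verify directly that $\varphi_{(\beta,\rho)}$ is a group homomorphism; invertibility of each $\beta(a)\rho(b) \in \GL(V)$ is automatic, so multiplicativity is the only thing to check.

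First I would recall the multiplication in $\Lambda_{A^{\op}} = (A,\cdot)\rtimes_{\lambda^{\op}}(A,\circ)$, namely
\[
(a_1,b_1)(a_2,b_2) = \bigl(a_1\,\lambda^{\op}_{b_1}(a_2),\; b_1\circ b_2\bigr).
\]
Applying $\varphi_{(\beta,\rho)}$ and using that $\beta$ is a homomorphism on $(A,\cdot)$ and $\rho$ is a homomorphism on $(A,\circ)$, I get
\[
\varphi_{(\beta,\rho)}\bigl((a_1,b_1)(a_2,b_2)\bigr) = \beta(a_1)\,\beta\!\bigl(\lambda^{\op}_{b_1}(a_2)\bigr)\,\rho(b_1)\,\rho(b_2).
\]

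The next step is to rewrite the middle factor using the compatibility relation \eqref{relation}, which says $\beta(\lambda^{\op}_{b_1}(a_2)) = \rho(b_1)\,\beta(a_2)\,\rho(b_1)^{-1}$. Substituting this yields
\[
\varphi_{(\beta,\rho)}\bigl((a_1,b_1)(a_2,b_2)\bigr) = \beta(a_1)\,\rho(b_1)\,\beta(a_2)\,\rho(b_1)^{-1}\,\rho(b_1)\,\rho(b_2) = \beta(a_1)\rho(b_1)\beta(a_2)\rho(b_2),
\]
which is exactly $\varphi_{(\beta,\rho)}(a_1,b_1)\,\varphi_{(\beta,\rho)}(a_2,b_2)$. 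Finally, one observes that $\varphi_{(\beta,\rho)}(1,1) = \beta(1)\rho(1) = \I_V$, so $\varphi_{(\beta,\rho)}$ is indeed a representation of $\Lambda_{A^{\op}}$.

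There is no real obstacle here; the statement is essentially a reformulation of the intertwining relation \eqref{relation} as the semi-direct product relation. The only thing to be careful about is the direction of conjugation in \eqref{relation}: it must match the twist $\lambda^{\op}$ (not $\lambda$) used in the definition of $\Lambda_{A^{\op}}$, which is why the ``opposite'' variant of the semi-direct product is the correct ambient group for this correspondence.
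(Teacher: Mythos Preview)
Your proof is correct. The paper itself does not supply a proof of this lemma---it simply records the result, attributing it to Kozakai and Tsang \cite[Remark 2.2]{KT24}---so your direct verification via the intertwining relation \eqref{relation} is exactly the natural argument and there is nothing further to compare.
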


In fact, we prove that the converse of Lemma \ref{lemma:repbracetogroup} also holds.

\begin{thm} \label{thm:grouptoskewbrace}
Let $A$ be a skew left brace and  $(V, \phi)$ be a representation of the group $\Lambda_{A^{\op}}$. Then $(V,\beta_{\phi},\rho_{\phi})$ forms a representation of $A$, where 	$\beta_\phi: (A, \cdot) \rightarrow \GL(V)$ and 	$\rho_\phi: (A, \circ) \rightarrow \GL(V)$ are given by 
	$\beta_{\phi}(a):=\phi(a,1)$ and $\rho_{\phi}(b):=\phi(1,b)$ for all $a,b \in A$.
\end{thm}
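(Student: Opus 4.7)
The plan is to verify the three conditions in the definition of a skew left brace representation using the fact that $\phi$ is a group homomorphism from $\Lambda_{A^{\op}}$ to $\GL(V)$, translating each condition into a computation inside the semidirect product.

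First I would check that $\beta_\phi$ and $\rho_\phi$ are group homomorphisms on $(A,\cdot)$ and $(A,\circ)$ respectively. Using the semidirect product rule $(a_1,b_1)(a_2,b_2)=(a_1\lambda^{\op}_{b_1}(a_2),\,b_1\circ b_2)$ together with $\lambda^{\op}_1=\Id$, we immediately get
\[
(a_1,1)(a_2,1)=(a_1a_2,1) \qquad\text{and}\qquad (1,b_1)(1,b_2)=(1,b_1\circ b_2).
\]
Applying $\phi$ and reading off the definitions of $\beta_\phi$ and $\rho_\phi$ yields $\beta_\phi(a_1a_2)=\beta_\phi(a_1)\beta_\phi(a_2)$ and $\rho_\phi(b_1\circ b_2)=\rho_\phi(b_1)\rho_\phi(b_2)$.

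Next I would verify the compatibility relation \eqref{relation}. The key is the identity
\[
(1,a)(b,1)(1,a)^{-1}=(\lambda^{\op}_a(b),1)
\]
inside $\Lambda_{A^{\op}}$, which is a direct calculation: $(1,a)(b,1)=(\lambda^{\op}_a(b),a)$, and since $(1,a)^{-1}=(1,a^{\dagger})$, multiplying on the right gives
\[
(\lambda^{\op}_a(b),a)(1,a^{\dagger})=(\lambda^{\op}_a(b)\cdot\lambda^{\op}_a(1),\,a\circ a^{\dagger})=(\lambda^{\op}_a(b),1).
\]
Applying the homomorphism $\phi$ to both sides of this identity translates it into
\[
\rho_\phi(a)\,\beta_\phi(b)\,\rho_\phi(a)^{-1}=\beta_\phi(\lambda^{\op}_a(b)),
\]
which is precisely \eqref{relation}.

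There is essentially no obstacle here beyond getting the semidirect product arithmetic straight; the entire content of the theorem is that the two natural inclusions $(A,\cdot)\hookrightarrow\Lambda_{A^{\op}}$ and $(A,\circ)\hookrightarrow\Lambda_{A^{\op}}$ satisfy precisely the conjugation relation that defines $\lambda^{\op}$, and $\phi$ transports this relation to $\GL(V)$. Combined with Lemma \ref{lemma:repbracetogroup}, this will establish the one-to-one correspondence between representations of $A$ and representations of $\Lambda_{A^{\op}}$ announced in the introduction.
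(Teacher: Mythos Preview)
Your proof is correct and follows essentially the same approach as the paper: verify that $\beta_\phi$ and $\rho_\phi$ are homomorphisms via the semidirect product multiplication, then establish the compatibility relation by computing the conjugation $(1,a)(b,1)(1,a)^{-1}=(\lambda^{\op}_a(b),1)$ and applying $\phi$. The only cosmetic difference is that the paper carries $\phi$ along at each step rather than first proving the group identity and then applying $\phi$, but the content is identical.
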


\begin{proof}
For $a_1, a_2, b_1, b_2\in A$, we have
$$\beta_\phi(a_1) \beta_\phi(a_2) = \phi(a_1, 1)\phi(a_2, 1) = \phi((a_1, 1)(a_2, 1)) = \phi\left( a_1\lambda^{\op}_{1}(a_2), 1 \right) = \phi(a_1 a_2, 1) = \beta_\phi(a_1 a_2)$$
	and 
$$\rho_\phi(b_1) \rho_\phi(b_2) =  \phi(1, b_1)\phi(1, b_2) = \phi((1, b_1)(1, b_2)) = \phi\left( \lambda^{\op}_{b_1}(1), b_1 b_2 \right) = \phi(1, b_1 b_2) = \rho_\phi(b_1 b_2).$$
	Thus, $\beta_\phi$ and $\rho_\phi$ are group homomorphisms. Further, for $a, b\in A$, we have
	\begin{align*}
		\rho_\phi(b)\beta_\phi(a)\rho_\phi(b)^{-1} & = \phi(1, b) \phi(a, 1) \phi(1, b^{-1}) = \phi\left( \lambda^{\op}_{b}(a), b \right) \phi(1, b^{-1}) = \phi\left( \lambda^{\op}_{b}(a) \lambda^{\op}_{b}(1), 1 \right) \\
		& = \phi\left( \lambda^{\op}_{b}(a), 1 \right) \\
		& = \beta_\phi\left( \lambda^{\op}_{b}(a) \right),
	\end{align*}
and hence $(V,\beta_{\phi},\rho_{\phi})$ forms a representation of $A$. Finally, by Lemma \ref{lemma:repbracetogroup}, there is a one-to-one correspondence between the representations of the group $\Lambda_{A^{\op}}$ and the representations of the skew left brace $A$.
\end{proof}

\begin{defn}
A representation $(V,\beta,\rho)$  of a skew left brace $A$ is called irreducible if the induced representation $(V,\varphi_{(\beta,\rho)})$ of the group $\Lambda_{A^{\op}}$ is irreducible. Further, two representations $(V_1,\beta_1,\rho_1)$ and $(V_2,\beta_2,\rho_2)$ of $A$ are called 
equivalent if the representations $(V_1,\varphi_{(\beta_1,\rho_1)})$ and $(V_2,\varphi_{(\beta_2,\rho_2)})$ of $\Lambda_{A^{\op}}$ are
equivalent.
\end{defn}

\begin{cor}\label{one one corres rep}
There is a one-to-one correspondence between the set of equivalence classes of irreducible representations of a skew left brace $A$ and the set of equivalence classes of irreducible representations of the group $\Lambda_{A^{\op}}$.
\end{cor}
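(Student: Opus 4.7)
The plan is to exhibit mutually inverse bijections between the collection of skew left brace representations of $A$ and the collection of group representations of $\Lambda_{A^{\op}}$, and then observe that these bijections transport irreducibility and equivalence essentially by definition. Concretely, I would let $\Phi$ denote the assignment $(V, \beta, \rho) \mapsto (V, \varphi_{(\beta,\rho)})$ supplied by Lemma \ref{lemma:repbracetogroup}, and let $\Psi$ denote the assignment $(V, \phi) \mapsto (V, \beta_\phi, \rho_\phi)$ supplied by Theorem \ref{thm:grouptoskewbrace}. The entire corollary then reduces to checking that $\Psi\circ\Phi$ and $\Phi\circ\Psi$ are identities, followed by a one-line transfer of the relevant notions.

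For $\Psi\circ\Phi$, starting from $(V,\beta,\rho)$, I would compute $\beta_{\varphi_{(\beta,\rho)}}(a)=\varphi_{(\beta,\rho)}(a,1)=\beta(a)\rho(1)=\beta(a)$ and $\rho_{\varphi_{(\beta,\rho)}}(b)=\varphi_{(\beta,\rho)}(1,b)=\beta(1)\rho(b)=\rho(b)$, recovering the original triple. For $\Phi\circ\Psi$, starting from $(V,\phi)$, I would compute
$$\varphi_{(\beta_\phi,\rho_\phi)}(a,b)=\beta_\phi(a)\rho_\phi(b)=\phi(a,1)\phi(1,b)=\phi\bigl((a,1)(1,b)\bigr)=\phi\bigl(a\lambda^{\op}_1(1),\,b\bigr)=\phi(a,b),$$
so that $\phi$ is recovered. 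Thus $\Phi$ and $\Psi$ are inverse bijections between the sets of representations on a fixed underlying vector space $V$.

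Finally, since a representation $(V,\beta,\rho)$ of $A$ is by definition irreducible precisely when $(V,\varphi_{(\beta,\rho)})=\Phi(V,\beta,\rho)$ is irreducible as a group representation of $\Lambda_{A^{\op}}$, the bijection $\Phi$ restricts to a bijection between irreducible brace representations and irreducible group representations; the inverse direction uses $\Phi\circ\Psi=\mathrm{id}$, so that irreducibility of $(V,\phi)$ transfers to irreducibility of $\Psi(V,\phi)$ via $\varphi_{(\beta_\phi,\rho_\phi)}=\phi$. The same argument, applied to pairs of representations, shows that $\Phi$ and $\Psi$ carry equivalence classes to equivalence classes. The resulting well-defined map on equivalence classes is the desired bijection.

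There is no real obstacle here: the content of the corollary is essentially packaged in the preceding lemma and theorem, and the definition of irreducibility and equivalence for brace representations was chosen precisely so that the transfer along $\Phi$ is tautological. The only thing that requires verification is the identity $\Phi\circ\Psi=\mathrm{id}$, which is the short semi-direct product computation displayed above.
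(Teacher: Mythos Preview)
Your proposal is correct and is essentially the same approach as the paper's: the paper gives no separate proof of this corollary, treating it as immediate from Theorem~\ref{thm:grouptoskewbrace} (whose proof already asserts the one-to-one correspondence between representations) together with the definitions of irreducibility and equivalence for brace representations, which are framed precisely so that the transfer along $\Phi$ is tautological. You have simply made explicit the verification that $\Phi$ and $\Psi$ are mutually inverse, which the paper leaves to the reader.
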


A subspace $W$ of a vector space $V$ is called proper if $\{ 0\} \neq W \neq V$.

\begin{prop} \label{prop:invariantsubspaces}
Let $A$ be a skew left brace and $(V, \beta, \rho)$ be a representation of $A$. Then $(V, \beta, \rho)$ is irreducible if and only if no proper subspace of $V$ is invariant under both $\beta$ and $\rho$.

\end{prop}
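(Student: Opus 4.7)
The proof is a direct unpacking of the definition of irreducibility of a skew brace representation together with the definition of the associated group representation $\varphi_{(\beta,\rho)}(a,b)=\beta(a)\rho(b)$. By the very definition given in this section, $(V,\beta,\rho)$ is irreducible precisely when the group representation $(V,\varphi_{(\beta,\rho)})$ of $\Lambda_{A^{\op}}$ is irreducible, which in turn is equivalent to saying that $V$ has no proper $\varphi_{(\beta,\rho)}$-invariant subspace. So the content of the proposition reduces to the following characterization of $\varphi_{(\beta,\rho)}$-invariant subspaces: for a subspace $W$ of $V$, one has $\varphi_{(\beta,\rho)}(a,b)(W)\subseteq W$ for all $(a,b)\in\Lambda_{A^{\op}}$ if and only if $\beta(a)(W)\subseteq W$ and $\rho(b)(W)\subseteq W$ for all $a,b\in A$.

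For the forward direction, suppose $W$ is invariant under $\varphi_{(\beta,\rho)}$. Specializing to elements of the form $(a,1)$ and $(1,b)$ in $\Lambda_{A^{\op}}$ and using that $\beta$ and $\rho$ are group homomorphisms (so $\beta(1)=\rho(1)=\operatorname{Id}_V$), the identities $\varphi_{(\beta,\rho)}(a,1)=\beta(a)$ and $\varphi_{(\beta,\rho)}(1,b)=\rho(b)$ immediately yield $\beta(a)(W)\subseteq W$ and $\rho(b)(W)\subseteq W$ for all $a,b\in A$. Conversely, assume $W$ is invariant under both $\beta$ and $\rho$. For an arbitrary element $(a,b)\in\Lambda_{A^{\op}}$, we compute
\[
\varphi_{(\beta,\rho)}(a,b)(W)=\beta(a)\rho(b)(W)\subseteq \beta(a)(W)\subseteq W,
\]
so $W$ is $\varphi_{(\beta,\rho)}$-invariant.

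Combining these two directions, proper $\varphi_{(\beta,\rho)}$-invariant subspaces of $V$ coincide exactly with proper subspaces that are simultaneously invariant under $\beta$ and under $\rho$. Hence the absence of the latter is equivalent to irreducibility of $(V,\beta,\rho)$, as asserted. There is no real obstacle in this argument; the only thing one must be careful about is using the semi-direct product structure of $\Lambda_{A^{\op}}$ to recognize that $\{(a,1),(1,b):a,b\in A\}$ generates the entire group, which is what guarantees that invariance under $\beta$ and $\rho$ separately is enough to deduce invariance under all of $\varphi_{(\beta,\rho)}$.
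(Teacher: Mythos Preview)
Your proof is correct and follows essentially the same approach as the paper: both arguments hinge on the identity $\varphi_{(\beta,\rho)}(a,b)=\beta(a)\rho(b)$ together with the specializations $\varphi_{(\beta,\rho)}(a,1)=\beta(a)$ and $\varphi_{(\beta,\rho)}(1,b)=\rho(b)$ to show that $\varphi_{(\beta,\rho)}$-invariant subspaces coincide with subspaces invariant under both $\beta$ and $\rho$. The only cosmetic difference is that you first isolate this equivalence of invariant subspaces and then deduce the irreducibility statement, whereas the paper runs the two contrapositive implications directly.
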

\begin{proof}
Suppose that $\phi_{(\beta, \rho)}$ is an irreducible representation of $\Lambda_{A^{\op}}$. If $W$ is a subspace of $V$ which is invariant under both  $\beta$ and $\rho$, then we see that 
$$\phi_{(\beta, \rho)}(a,b) (W) = \beta(a)(\rho(b)(W)) \subseteq \beta(a)(W) \subseteq W$$
for all $(a,b) \in \Lambda_{A^{\op}}$. This implies that $W$ cannot be a proper subspace of $V$.
\par 
Conversely, suppose that no proper subspace of $V$ is invariant under both  $\beta$ and $\rho$. Let $W$ be a subspace of $V$ which is invariant under $\phi_{(\beta, \rho)}$, that is, $\phi_{(\beta, \rho)}(a, b)(W) \subseteq W$ for all $(a,b) \in \Lambda_{A^{\op}}$. This implies that $\beta(a)(W) \subseteq W$ for all $a\in (A, \cdot)$ and  $\rho(a)(W) \subseteq W$ for all $a\in (A, \circ)$. Hence, $W$ is not a proper subspace of $V$, and therefore $\phi_{(\beta, \rho)}$ is irreducible.
\end{proof}

In view of Corollary \ref{one one corres rep}, it is sufficient to look at the representations of $\Lambda_{A^{\op}}$ to understand the  representations of $A$. Henceforth, we assume that all our representations are over the field $\mathbb{C}$ of complex numbers. Let $\ird(A)$ denote the set of dimensions of all irreducible representations of the skew left brace $A$. In the following examples, we obtain $\ird(A)$ for some classes of skew left braces. 

\begin{example} \label{ex:repbracep^2}
	\textnormal{
		Let $A$ be a left brace of order $p^2$, where $p$ is a prime. By Theorem \ref{commutator1}, we have $\Lambda'_{A^{\op}}=A' \rtimes (A, \circ)'$. Since $(A, \circ)$ is abelian, and  $|A^{\prime}| \leq p$, we get $|\Lambda'_{A^{\op}}| = 1$ or $p$.
		\begin{itemize}
			\item If $|\Lambda'_{A^{\op}}| = 1$, then $\Lambda_{A^{\op}}$ is an abelian group of order $p^4$. By \cite[Corollary 2.7]{IBook}, $\Lambda_{A^{\op}}$ has $p^4$ many inequivalent representations of dimension 1.
			\item If  $|\Lambda'_{A^{\op}}| = p$, then $\Lambda_{A^{\op}}$ is a non-abelian group of order $p^4$, and $|\Z(\Lambda_{A^{\op}})| = p$ or $p^2$. In this case, $|\Lambda_{A^{\op}}/ \Lambda'_{A^{\op}}| = p^3$. Hence, by \cite[Corollary 2.23]{IBook}, $\Lambda_{A^{\op}}$ has $p^3$ many inequivalent representations of dimension 1. Since $\Lambda_{A^{\op}}$ is a non-abelian group, let $\alpha$ be a non-linear (degree more than 1) irreducible representation of $\Lambda_{A^{\op}}$ and let $\chi$ be the character of $\alpha$. By \cite[Corollary 2.30]{IBook}, $\chi(1)^2 \leq |\Lambda_{A^{\op}}/\Z(\Lambda_{A^{\op}})| \leq p^3$. Thus, $\dim(\alpha) = \chi(1) = p$, and hence $\ird(\Lambda_{A^{\op}}) = \{ 1, p \}$. By \cite[Corollary 2.7]{IBook}, we have
$$p^4 = |\Lambda_{A^{\op}}| = p^3 \cdot 1 + r \cdot p^2,$$
			where $r$ is the number of inequivalent irreducible representations of dimension $p$. Thus, $\Lambda_{A^{\op}}$ has 
$r = p^2 - p$ many inequivalent irreducible representations of dimension $p$. 
\end{itemize}}
\end{example}

\begin{example} \label{example:skewbracep^3}
\textnormal{
Let $A$ be a skew brace of order $p^3$, where $p>2$ is a prime, such that $|\Z(\Lambda_{A^{\op}})|\geq p^3$. Note that, there exist groups of order $p^6$ with center of order $p^3$, such as the groups belonging to the isoclinic family $\Phi_{i}$ for $i\in \{ 2,3,4,6,11 \}$ as listed in \cite[Table 1]{NO'BV23}. By Theorem \ref{commutator1}, we have $\Lambda'_{A^{\op}}=A' \rtimes (A, \circ)'$. Since $(A, \circ)$ is either abelian, or an extra-special group of order $p^3$, and $|A'| \leq p^2$, we get  $|\Lambda'_{A^{\op}}| = 1$, $p$, $p^2$, or $p^3$.
		\begin{itemize}
			\item If $|\Lambda'_{A^{\op}}| = 1$, then $\Lambda_{A^{\op}}$ is an abelian group of order $p^4$. By \cite[Corollary 2.7]{IBook}, $\Lambda_{A^{\op}}$ has $p^4$ many inequivalent representations of dimension 1.
			\item If  $|\Lambda'_{A^{\op}}| = p$, then $\Lambda_{A^{\op}}$ is a non-abelian group of order $p^6$ belonging to the isoclinic family $\Phi_{2}$ of \cite[Table 1]{NO'BV23}. In this case, $|\Z(\Lambda_{A^{\op}})| = p^4$, and hence $\ird(\Lambda_{A^{\op}}) = \{ 1, p \}$. By \cite[Corollary 2.7]{IBook}, we have
			\[ p^6 = |\Lambda_{A^{\op}}| = |\Lambda_{A^{\op}}/\Lambda'_{A^{\op}}| + r\cdot p^2, \]
			where $r$ is the number of inequivalent irreducible representations of dimension $p$. Thus, $\Lambda_{A^{\op}}$ has 
$r = p^4 - p^3$ many inequivalent irreducible representations of dimension $p$.
			\item If  $|\Lambda'_{A^{\op}}| = p^2$, then $\Lambda_{A^{\op}}$ has $|\Lambda_{A^{\op}}/\Lambda'_{A^{\op}}| = p^4$ many inequivalent representations of dimension 1. Since $|\Z(\Lambda_{A^{\op}})|\geq p^3$, we have $|\Lambda_{A^{\op}}/\Z(\Lambda_{A^{\op}})|\leq p^3$. By \cite[Corollary 2.30]{IBook}, we have $\ird(\Lambda_{A^{\op}}) = \{ 1, p \}$. Then 
			\[ p^6 = |\Lambda_{A^{\op}}| = p^4 + r\cdot p^2, \]
			where $r$ is the number of inequivalent irreducible representations of dimension $p$. Hence, $\Lambda_{A^{\op}}$ has 
$r = p^4 - p^2$ many inequivalent irreducible representations of dimension $p$.
			\item If  $|\Lambda'_{A^{\op}}| = p^3$, then $\Lambda_{A^{\op}}$ has $|\Lambda_{A^{\op}}/\Lambda'_{A^{\op}}| = p^3$ many inequivalent representations of dimension 1. Since $|\Z(\Lambda_{A^{\op}})|\geq p^3$, we have $|\Lambda_{A^{\op}}/\Z(\Lambda_{A^{\op}})|\leq p^3$. By \cite[Corollary 2.30]{IBook}, we have $\ird(\Lambda_{A^{\op}}) = \{ 1, p \}$. Then 
			\[ p^6 = |\Lambda_{A^{\op}}| = p^3 + r\cdot p^2, \]
			where $r$ is the number of inequivalent irreducible representations of dimension $p$. Thus, $\Lambda_{A^{\op}}$ has 
$r = p^4 - p$ many inequivalent irreducible representations of dimension $p$.
		\end{itemize}
	}
\end{example}

\begin{example} \label{example:bicyclicskewbrace}
{\rm  A left brace is called bi-cyclic if both the additive and the multiplicative groups are cyclic \cite{WR08}. Let $A$ be a bi-cyclic left brace of order $p^n$, where $p>2$ is a prime and $n \ge 2$. Then $\Lambda_{A^{\op}}$ is a metacyclic $p$-group of the form $\langle a, b \mid a^{p^{n}} =1,  b^{p^{n}} = 1, b^{-1}ab = a^{1+p^{r}} \rangle$,
		where $0< r <n$. A direct check gives $\Lambda_{A^{\op}}' = \langle a^{p^r} \rangle$. Thus, $\Lambda_{A^{\op}}$ has $|\Lambda_{A^{\op}}/\Lambda_{A^{\op}}'| = p^{n+r}$ many inequivalent representations of  dimension 1.  Further, by \cite[Theorem 4.29]{B95}, we have 
$$\ird(\Lambda_{A^{\op}}) = \{ 1, \,p^{n-r-\alpha} \mid 0< r< n \text{ and } 0 \leq \alpha < n-r \}.$$}		
\end{example}

Next, we derive some results on representations using the structural properties proved in Section \ref{sec:properties}.

\begin{prop} \label{prop:irrAcircsubsetirrLambdaA}
If $A$ is a skew left brace, then
$$\ird(A, \circ) \subseteq \ird(\Lambda_{A^{\op}}).$$
\end{prop}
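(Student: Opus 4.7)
The plan is to exhibit every irreducible representation of $(A, \circ)$ as an irreducible representation of $\Lambda_{A^{\op}}$ of the same dimension, by pulling back along the natural projection.

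First, I would observe that the map $\pi: \Lambda_{A^{\op}} \to (A, \circ)$ given by $\pi(a, b) = b$ is a surjective group homomorphism (this follows immediately from the semi-direct product multiplication $(a_1, b_1)(a_2, b_2) = (a_1 \lambda^{\op}_{b_1}(a_2), b_1 \circ b_2)$ fixed in the notation section), with kernel $(A, \cdot) \times \{1\}$.

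Next, given an irreducible representation $\rho: (A, \circ) \to \GL(V)$, I would form the pullback $\phi := \rho \circ \pi : \Lambda_{A^{\op}} \to \GL(V)$, explicitly $\phi(a, b) = \rho(b)$. Being the composition of two homomorphisms, $\phi$ is a representation of $\Lambda_{A^{\op}}$ on the same vector space $V$, so $\dim \phi = \dim \rho$. Because $\pi$ is surjective, the image subgroup $\phi(\Lambda_{A^{\op}}) = \rho(A, \circ)$ inside $\GL(V)$, and hence a subspace $W \le V$ is $\phi$-invariant if and only if it is $\rho$-invariant. Irreducibility of $\rho$ therefore transfers directly to irreducibility of $\phi$, giving $\dim \rho \in \ird(\Lambda_{A^{\op}})$. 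Since the dimension of $\rho$ depends only on its equivalence class, this shows $\ird(A, \circ) \subseteq \ird(\Lambda_{A^{\op}})$.

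There is no genuine obstacle here; the argument is just the standard observation that inflation along a surjective homomorphism preserves both irreducibility and dimension. The only minor point worth stating explicitly in the write-up is that a $\phi$-invariant subspace is automatically $\rho$-invariant, which I would justify by noting that every $\rho(b)$ arises as $\phi(1, b)$.
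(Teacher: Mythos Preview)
Your proof is correct and is essentially the paper's own argument: the paper pairs $\rho$ with the trivial representation $\beta_0$ of $(A,\cdot)$ to obtain the skew brace representation $(V,\beta_0,\rho)$, which under Lemma~\ref{lemma:repbracetogroup} corresponds to exactly your inflated map $\phi(a,b)=\beta_0(a)\rho(b)=\rho(b)$. The only cosmetic difference is that the paper phrases irreducibility via Proposition~\ref{prop:invariantsubspaces} (a subspace invariant under both $\beta_0$ and $\rho$ is just a $\rho$-invariant subspace, since $\beta_0$ is trivial), whereas you argue directly that $\phi(\Lambda_{A^{\op}})=\rho(A,\circ)$.
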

\begin{proof}
	The result follows from the fact that if $\rho: (A, \circ) \to \GL(V)$ is an irreducible representation, then $(V, \beta_{0}, \rho)$ is an irreducible representation of $A$, where  $\beta_0 : (A, \cdot) \to \GL(V)$ is the trivial representation.
\end{proof}

\begin{cor}
Let $A$ and $B$ be skew left braces such that $(A, \circ)$ and $(B, \circ)$ are isoclinic groups. Then
$$\ird(A, \circ)  = \ird(B, \circ) \subseteq \ird(\Lambda_{A^{\op}}) \cap \ird(\Lambda_{B^{\op}}).$$
\end{cor}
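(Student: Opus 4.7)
The plan is to decompose the statement into the equality $\ird(A, \circ) = \ird(B, \circ)$ and the two inclusions $\ird(A, \circ) \subseteq \ird(\Lambda_{A^{\op}})$ and $\ird(B, \circ) \subseteq \ird(\Lambda_{B^{\op}})$.

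The two containments are immediate from Proposition \ref{prop:irrAcircsubsetirrLambdaA} applied separately to $A$ and to $B$. Indeed, pairing any irreducible representation $\rho$ of $(A, \circ)$ with the trivial representation of $(A, \cdot)$ yields an irreducible representation of the skew left brace $A$ whose associated $\Lambda_{A^{\op}}$-representation (via Lemma \ref{lemma:repbracetogroup}) has the same dimension as $\rho$; the identical argument works for $B$.

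The substantive point is the equality $\ird(A, \circ) = \ird(B, \circ)$, which I would deduce from the classical fact that isoclinic finite groups have the same set of irreducible character degrees. The standard justification runs through projective representations: every irreducible character of a finite group $G$ lies above a linear character of $\Z(G)$ and is determined up to that central character by an irreducible projective representation of $G/\Z(G)$, whose cocycle is controlled by the commutator map $\eta_G \colon (G/\Z(G)) \times (G/\Z(G)) \to G'$ appearing in the diagram \eqref{cdsb2}. An isoclinism between $(A, \circ)$ and $(B, \circ)$ supplies isomorphisms $G/\Z(G) \cong H/\Z(H)$ and $G' \cong H'$ intertwining $\eta_{(A, \circ)}$ with $\eta_{(B, \circ)}$, which transports the relevant cocycle and therefore produces a degree-preserving bijection between the irreducible characters on the two sides.

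The main obstacle is conceptual rather than computational: one must cite or reprove the theorem that isoclinism preserves the set of character degrees, which is a nontrivial but well-known fact. Once this is granted, assembling the three ingredients yields the asserted chain $\ird(A, \circ) = \ird(B, \circ) \subseteq \ird(\Lambda_{A^{\op}}) \cap \ird(\Lambda_{B^{\op}})$.
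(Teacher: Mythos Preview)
Your proposal is correct and follows essentially the same approach as the paper: the paper invokes \cite[Theorem 3.2]{T76} (Tappe) for the equality $\ird(A,\circ)=\ird(B,\circ)$ and then applies Proposition~\ref{prop:irrAcircsubsetirrLambdaA} for the two inclusions, exactly as you do. Your additional sketch of why isoclinism preserves character degrees via projective representations is supplementary detail the paper omits in favor of the citation, not a different route.
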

\begin{proof}
	Since $(A, \circ)$ and $(B, \circ)$ are isoclinic groups, by \cite[Theorem 3.2]{T76}, we have $\ird(A, \circ)  = \ird(B, \circ)$. The result now follows from Proposition \ref{prop:irrAcircsubsetirrLambdaA}.
\end{proof}

\begin{cor}
	Let $A$ and $B$ be isoclinic skew left braces such that any of the following conditions are satisfied:
	\begin{enumerate}
		\item $\Z(\Lambda_{A^{\op}}) = \Ann(A) \times \Ann(A)$ and $\Z(\Lambda_{B^{\op}}) = \Ann(B) \times \Ann(B)$.
		\item $\Ann(A) \cap A^\prime=\{ 1\}$ and $\Ann(B) \cap B^\prime=\{1\}$.
	\end{enumerate}
	Then $\ird(\Lambda_{A^{\op}}) = \ird(\Lambda_{B^{\op}})$.
\end{cor}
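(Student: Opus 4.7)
The plan is to combine Theorem \ref{isoclinicsdp} with the classical result of Tappe \cite[Theorem 3.2]{T76} which states that isoclinic groups have the same set of irreducible representation dimensions. This is precisely the same pattern used in the immediately preceding corollary, but now applied at the level of the semi-direct products rather than the multiplicative groups.

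\medskip

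\noindent\textbf{Step 1.} Under either hypothesis (1) or (2), invoke Theorem \ref{isoclinicsdp} to conclude that $\Lambda_{A^{\op}}$ and $\Lambda_{B^{\op}}$ are isoclinic as groups. This is the only place where the two alternative hypotheses are needed, and the verification is already handled inside Theorem \ref{isoclinicsdp}; the corollary simply passes the hypothesis through.

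\medskip

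\noindent\textbf{Step 2.} Apply \cite[Theorem 3.2]{T76} to the isoclinic pair of groups $(\Lambda_{A^{\op}}, \Lambda_{B^{\op}})$ to deduce $\ird(\Lambda_{A^{\op}}) = \ird(\Lambda_{B^{\op}})$. This finishes the argument.

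\medskip

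There is essentially no obstacle: both assertions are immediate citations of results that have already been set up in the paper (Theorem \ref{isoclinicsdp}) or in the literature (Tappe's theorem). The only judgement call is whether to explicitly restate the conclusion of Theorem \ref{isoclinicsdp} for the reader's convenience before citing \cite{T76}; a one-sentence proof suffices, phrased along the lines of ``\emph{By Theorem \ref{isoclinicsdp}, $\Lambda_{A^{\op}}$ and $\Lambda_{B^{\op}}$ are isoclinic groups, and hence by \cite[Theorem 3.2]{T76}, $\ird(\Lambda_{A^{\op}}) = \ird(\Lambda_{B^{\op}})$.}''
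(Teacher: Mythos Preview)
Your proposal is correct and matches the paper's own proof essentially verbatim: the paper writes ``The result follows from Theorem \ref{isoclinicsdp} and \cite[Theorem 3.2]{T76},'' which is exactly your two-step argument.
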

\begin{proof}
	The result follows from Theorem \ref{isoclinicsdp} and \cite[Theorem 3.2]{T76}.
\end{proof}

\begin{remark}
\textnormal{
If $A$ is a trivial skew left brace, then by Proposition \ref{prop:AcongAop}, $\Lambda_{A} \cong \Lambda_{A^{\op}}$. Hence, the representations of $A$ and $A^{\op}$ are in one-to-one correspondence. Note that, $A$ and $A^{\op}$ are non-isomorphic skew left braces when $(A, \cdot)$ is a non-abelian group.
} 
\end{remark}

\begin{cor}
Let $A$ be a skew left brace such that $\Ann(A)$ is non-trivial. Then $\Lambda_{A^{\op}}$ does not admit any faithful irreducible representation.
\end{cor}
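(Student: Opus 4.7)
The plan is to combine Proposition \ref{centersdp}(5), which tells us that $\Ann(A) \times \Ann(A) \trianglelefteq \Z(\Lambda_{A^{\op}})$, with the classical Schur-lemma argument that the center of a group acts by scalars in any irreducible complex representation. Concretely, I would argue by contradiction: assume $\phi : \Lambda_{A^{\op}} \to \GL(V)$ is a faithful irreducible representation. By Schur's lemma, $\phi$ sends each element of $\Z(\Lambda_{A^{\op}})$ to a scalar matrix, so the restriction $\phi|_{\Z(\Lambda_{A^{\op}})}$ factors through the subgroup of scalar matrices, which is canonically isomorphic to $\mathbb{C}^*$. Faithfulness of $\phi$ forces this restriction to be an injection $\Z(\Lambda_{A^{\op}}) \hookrightarrow \mathbb{C}^*$.

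Next I would produce the desired obstruction using Proposition \ref{centersdp}(5). Since $\Ann(A)$ is non-trivial and finite (as is implicit throughout this section), I pick any $z \in \Ann(A)$ of prime order $p$ by replacing a non-trivial element by an appropriate power. Then the two distinct non-identity elements $(z, 1)$ and $(1, z)$ lie in $\Ann(A) \times \Ann(A) \subseteq \Z(\Lambda_{A^{\op}})$ and generate a subgroup isomorphic to $\mathbb{Z}/p \times \mathbb{Z}/p$. Since every finite subgroup of $\mathbb{C}^*$ is cyclic, $\mathbb{Z}/p \times \mathbb{Z}/p$ cannot embed into $\mathbb{C}^*$, contradicting the injectivity established in the previous step.

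There is no real obstacle here; the entire argument is a one-step application of Schur's lemma once Proposition \ref{centersdp}(5) is in place. The only thing to be mildly careful about is writing the two independent central elements $(z,1)$ and $(1,z)$ rather than a single cyclic element, since merely having $\Ann(A)$ non-trivial in the center is not enough (a cyclic central subgroup does embed into $\mathbb{C}^*$); it is the product structure on $\Ann(A) \times \Ann(A)$ that obstructs faithful irreducibility. Equivalently, one could simply cite the standard criterion that a finite group has a faithful irreducible complex representation only if the socle of its center is cyclic, and observe that the socle of $\Z(\Lambda_{A^{\op}})$ contains $\Ann(A)[p] \times \Ann(A)[p]$ for any prime $p$ dividing $|\Ann(A)|$.
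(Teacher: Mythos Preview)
Your proof is correct and follows essentially the same approach as the paper: invoke Proposition \ref{centersdp}(5) to obtain $\Ann(A)\times\Ann(A)\le \Z(\Lambda_{A^{\op}})$, and then use the classical fact (which the paper cites as \cite[Theorem 2.32(a)]{IBook}) that a finite group with non-cyclic center admits no faithful irreducible complex representation. You simply unpack the Schur's-lemma argument behind that citation instead of quoting it, which is fine.
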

\begin{proof}
By Proposition \ref{centersdp}(5), we have $\Ann(A) \times \Ann(A) \trianglelefteq \Z(\Lambda_{A^{\op}})$. Thus, if $\Ann(A)$ is non-trivial, then $\Z(\Lambda_{A^{\op}})$ is not cyclic. Then, it follows from \cite[Theorem 2.32(a)]{IBook} that $\Lambda_{A^{\op}}$ does not admit any faithful irreducible representation.
\end{proof}

\begin{cor}
	Let $A$ be a skew left brace. Then the number of 1-dimensional representations of  $A$ is 
	\[ |A/A'| \cdot \big(\text{The number of 1-dimensional representations  of } (A, \circ ) \big). \]
\end{cor}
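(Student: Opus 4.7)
The plan is to deduce this directly from the one-to-one correspondence of Corollary \ref{one one corres rep} together with Corollary \ref{cor:LamAmodLamAprime}. First, I would invoke Corollary \ref{one one corres rep} to identify the equivalence classes of $1$-dimensional representations of the skew left brace $A$ with those of the group $\Lambda_{A^{\op}}$. This reduces the problem to a purely group-theoretic count of linear characters of $\Lambda_{A^{\op}}$.

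Next, I would use the standard fact from group representation theory that the number of $1$-dimensional representations of a finite group $G$ equals $|G/G'|$ (this is just the number of characters of the abelianisation). Applying this to $G = \Lambda_{A^{\op}}$, the count becomes $|\Lambda_{A^{\op}}/\Lambda_{A^{\op}}'|$. By Corollary \ref{cor:LamAmodLamAprime}, this quotient is isomorphic to $A/A' \times (A, \circ)/(A, \circ)'$, so its order factors as $|A/A'| \cdot |(A, \circ)/(A, \circ)'|$.

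Finally, I would identify $|(A, \circ)/(A, \circ)'|$ with the number of $1$-dimensional representations of the group $(A, \circ)$, again by the same abelianisation count. Assembling the pieces yields the stated formula. There is no genuine obstacle here: the corollary is essentially a bookkeeping consequence of the group isomorphism in Corollary \ref{cor:LamAmodLamAprime} combined with the correspondence in Corollary \ref{one one corres rep}, so the proof is just a two-line chain of equalities.
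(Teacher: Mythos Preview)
Your proposal is correct and follows essentially the same route as the paper: the paper invokes the standard fact that a group $G$ has $|G/G'|$ linear characters and then applies Corollary~\ref{cor:LamAmodLamAprime}, which is exactly your chain of equalities (with the identification of representations of $A$ and of $\Lambda_{A^{\op}}$ via Corollary~\ref{one one corres rep} being implicit in the paper's setup). Your write-up is simply a more explicit version of the same two-line argument.
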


\begin{proof}
By \cite[Corollary 2.23]{IBook}, the number of 1-dimensional representations of a group $G$ is $|G/G'|$. The result now follows from Corollary \ref{cor:LamAmodLamAprime}.
\end{proof}

\begin{cor}
Let $A$ and $B$ be isoclinic skew left braces of the same order. Then $A$ and $B$ admit the same number of 1-dimensional representations.
\end{cor}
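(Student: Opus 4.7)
The plan is to chain together two ingredients: the previous corollary, which expresses the number of 1-dimensional representations of a skew left brace $A$ as a product of two indices, and the cardinality information extracted from isoclinism.

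By the preceding corollary, the number of 1-dimensional representations of $A$ equals $|A/A'| \cdot n_{(A,\circ)}$, where $n_{(A,\circ)}$ denotes the number of 1-dimensional representations of the multiplicative group $(A,\circ)$. Since a finite group $G$ admits exactly $|G/G'|$ 1-dimensional complex representations (by \cite[Corollary 2.23]{IBook}), this count becomes $|A/A'| \cdot |(A,\circ)/(A,\circ)'|$, and likewise for $B$. So I only need to verify that both factors agree for $A$ and $B$.

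The skew brace isoclinism supplies an isomorphism $\xi_2 : A' \to B'$ of skew left braces, whence $|A'| = |B'|$; combined with the hypothesis $|A| = |B|$, this yields $|A/A'| = |B/B'|$. For the second factor, I invoke the proposition from \cite{LV23} recalled earlier: an isoclinism of skew left braces between $A$ and $B$ induces an isoclinism between the multiplicative groups $(A,\circ)$ and $(B,\circ)$. Isoclinic groups have isomorphic commutator subgroups, so $|(A,\circ)'| = |(B,\circ)'|$, and since $|(A,\circ)| = |A| = |B| = |(B,\circ)|$, we conclude $|(A,\circ)/(A,\circ)'| = |(B,\circ)/(B,\circ)'|$. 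Multiplying the two equalities gives the desired conclusion. There is no real obstacle here beyond assembling the pieces; the hypothesis $|A| = |B|$ is used precisely to pass from equality of commutator orders to equality of abelianization orders, since isoclinism alone does not control the order of the abelianization.
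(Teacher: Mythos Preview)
Your proof is correct. The route differs only in packaging from the paper's one-line argument. The paper invokes Theorem~\ref{isocom} directly: isoclinism of $A$ and $B$ gives $\Lambda'_{A^{\op}}\cong\Lambda'_{B^{\op}}$, so with $|A|=|B|$ the abelianizations of $\Lambda_{A^{\op}}$ and $\Lambda_{B^{\op}}$ have the same order, and then \cite[Corollary~2.23]{IBook} finishes. You instead use the preceding corollary to factor the count as $|A/A'|\cdot|(A,\circ)/(A,\circ)'|$ and then match each factor separately, extracting $|A'|=|B'|$ from $\xi_2$ and $|(A,\circ)'|=|(B,\circ)'|$ from the induced group isoclinism. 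This is exactly the content of Theorem~\ref{isocom} unpacked, since $\Lambda'_{A^{\op}}=A'\rtimes_{\lambda^{\op}}(A,\circ)'$ by Theorem~\ref{commutator1}; your version avoids quoting Theorem~\ref{isocom} at the cost of citing one extra proposition, but the underlying mathematics is identical.
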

\begin{proof}
The result follows from Theorem \ref{isocom} and \cite[Corollary 2.23]{IBook}.
\end{proof}

Let $\phi$, $\beta_{reg}$ and $\rho_{reg}$ be the regular representations of the groups $\Lambda_{A^{\op}}$, $(A, \cdot)$ and $(A, \circ)$, respectively. Let $\beta_{\phi}$ and $\rho_{\phi}$ be the representations of $(A, \cdot)$ and $(A, \circ)$, respectively, induced from $\phi$ (see Theorem \ref{thm:grouptoskewbrace}).

\begin{thm} \label{thm:regularrep}
	Let $A$ be a finite skew left brace and $\phi$ be the regular representation of $\Lambda_{A^{\op}}$. Then 
$$\beta_\phi = |A|\cdot \beta_{reg} \quad \text{and} \quad \rho_\phi = |A| \cdot \rho_{reg}.$$

\end{thm}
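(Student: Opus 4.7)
The plan is to observe that $\beta_\phi$ and $\rho_\phi$ are simply the restrictions of the regular representation $\phi$ of $\Lambda_{A^{\op}}$ to two naturally embedded subgroups, and then to invoke the classical fact that the restriction of the regular representation of a finite group $G$ to a subgroup $H$ is the direct sum of $[G:H]$ copies of the regular representation of $H$.

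Concretely, I would first check that $\{(a, 1) : a \in A\}$ is a subgroup of $\Lambda_{A^{\op}}$ isomorphic to $(A, \cdot)$, since $(a_1, 1)(a_2, 1) = (a_1 \lambda^{\op}_1(a_2), 1) = (a_1 a_2, 1)$, and similarly that $\{(1, b) : b \in A\}$ is a subgroup isomorphic to $(A, \circ)$. By Theorem \ref{thm:grouptoskewbrace}, the representations $\beta_\phi$ and $\rho_\phi$ are precisely the restrictions of $\phi$ to these two subgroups. Since $|\Lambda_{A^{\op}}| = |A|^2$ and each embedded subgroup has order $|A|$, the index equals $|A|$ in both cases, and the desired equalities $\beta_\phi = |A| \cdot \beta_{reg}$ and $\rho_\phi = |A| \cdot \rho_{reg}$ follow immediately.

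If a more hands-on verification is preferred, a one-line character computation suffices: the regular character of $\Lambda_{A^{\op}}$ satisfies $\chi_\phi(g) = |A|^2\, \delta_{g, 1}$, so $\chi_{\beta_\phi}(a) = \chi_\phi(a, 1) = |A|^2\, \delta_{a, 1} = |A| \cdot \chi_{\beta_{reg}}(a)$, and likewise $\chi_{\rho_\phi}(b) = |A| \cdot \chi_{\rho_{reg}}(b)$. Since complex representations of a finite group are determined up to equivalence by their characters, the theorem follows. There is no genuine obstacle in either approach; the statement is essentially a repackaging of a standard property of regular representations combined with the subgroup structure of $\Lambda_{A^{\op}}$ identified in Theorem \ref{thm:grouptoskewbrace}.
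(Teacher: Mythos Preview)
Your argument is correct and takes a more conceptual route than the paper. The paper proceeds by explicit matrix computation on the basis $\{(a_i,a_j)\}$: it shows that $\beta_\phi(a_s)\colon(a_i,a_j)\mapsto(a_sa_i,a_j)$ is block diagonal with $|A|$ copies of $[\beta_{reg}(a_s)]$, and then asserts the analogous block-diagonal form for $\rho_\phi$. You instead recognise $\beta_\phi$ and $\rho_\phi$ as the restrictions of the regular representation of $\Lambda_{A^{\op}}$ to the index-$|A|$ subgroups $(A,\cdot)\times\{1\}$ and $\{1\}\times(A,\circ)$, and invoke either the right-coset decomposition $\mathbb{C}[G]\cong\bigoplus_t \mathbb{C}[H]\,t$ or the one-line character identity $\chi_\phi(g)=|A|^2\delta_{g,1}$. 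Your approach is shorter, works over any field (in the coset version), and sidesteps a delicate point in the paper's treatment of $\rho_\phi$: since $\rho_\phi(a_s)$ sends $(a_i,a_j)\mapsto(\lambda^{\op}_{a_s}(a_i),\,a_s\circ a_j)$, in the natural basis the resulting matrix is actually a block-\emph{permutation} matrix (the $i$-blocks are shuffled by $\lambda^{\op}_{a_s}$) rather than block diagonal, and it is precisely the change of basis to right cosets of $\{1\}\times(A,\circ)$---implicit in your restriction argument---that realises the claimed diagonal decomposition. The paper's hands-on approach, in return, makes the intertwiner for $\beta_\phi$ completely explicit.
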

\begin{proof}
	Let $A = \{ a_1, a_2, \ldots, a_n = 1 \}$. Then $\beta_{reg}: (A, \cdot) \rightarrow \GL(kA)$ is given by $\beta_{reg}(a_s)(a_i) = a_s \cdot a_i = a_{k_{si}}$ for some $1\leq k_{si}\leq n$. Similarly, $\rho_{reg}: (A, \circ) \rightarrow \GL(kA)$ is given by $\rho_{reg}(a_s)(a_j) = a_s \circ a_j = a_{l_{sj}}$ for some $1\leq l_{sj}\leq n$.	Then, for each $1\leq s \leq n$, the matrices of $\beta_{reg}(a_s)$ and $\rho_{reg}(a_s)$ are given by
$$
	\left[ \beta_{reg}(a_s) \right] = 
	\begin{cases}
		1 & \quad \textrm{for}~(k_{si}, i)-{\rm th} \text{ entry}\\
		0 & \quad \text{otherwise}
	\end{cases} \quad \textrm{and} \quad
	\left[ \rho_{reg}(a_s) \right] = 
	\begin{cases}
		1 & \quad \textrm{for}~(l_{sj}, j)-{\rm th} \text{ entry}\\
		0 & \quad \text{otherwise}
	\end{cases},
$$
respectively. In fact, each $\left[ \beta_{reg}(a_s) \right]$ and $\left[ \rho_{reg}(a_s) \right]$ is a permutation matrix. The regular representation $\phi_{reg}: \Lambda_{A^{\op}} \rightarrow \GL(k\Lambda_{A^{\op}})$ is given by 
$$\phi_{reg}(a,b)(x, y) = (a,b)(x, y) = (a\cdot \lambda^{\op}_{b}(x), b\circ y )$$
for all $(a,b),(x,y) \in \Lambda_{A}.$
	Then $\beta_{\phi}: (A, \cdot) \rightarrow \GL(k\Lambda_{A^{\op}})$ is given by 
$$\beta_{\phi}(a_s) (a_i, a_j) = \phi_{reg}(a_s, 1)(a_i, a_j) = (a_s, 1)(a_i, a_j) = (a_s\cdot a_i, a_j) = (a_{k_{si}}, a_j)$$
	for each $1\leq i, j, s \leq n$. Similarly, $\rho_\phi:(A, \circ) \rightarrow \GL(k\Lambda_{A^{\op}})$ is given by
	\begin{align*}
		\rho_\phi(a_s) (a_i, a_j) & = \phi_{reg}(1, a_s)(a_i, a_j) = (1, a_s)(a_i, a_j) = (\lambda^{\op}_{a_s}(a_i), a_s \circ a_j)\\
		& = ((a_s \circ a_i)\cdot a_s^{-1}, a_{l_{sj}} ) = (a_{l_{si}}  \cdot a_{s}^{-1}, a_{l_{sj}}) = (a_{l_{si}^{\prime}} , a_{l_{sj}})
	\end{align*}
for some $1\leq l_{si}^{\prime}\leq n.$	Then,  for each $1\leq s \leq n$, $\beta_{\phi}(a_s)$ is a block diagonal matrix of order $n^2$, that is,
	$$[\beta_{\phi}(a_s)] = diag \big( [\beta_{reg}(a_s)],  [\beta_{reg}(a_s)], \ldots,  [\beta_{reg}(a_s)] \big).$$
	This gives $\beta_{\phi} =  |A| \cdot \beta_{reg}$. Similarly,  for each $1\leq s \leq n$, $\rho_\phi(a_s)$  is a block diagonal matrix of order $n^2$, that is, 
	$$\rho_{\phi}(a_s) = diag \big([\rho_{reg}(a_s)], [\rho_{reg}(a_s)], \ldots, [\rho_{reg}(a_s)] \big).$$
	Thus, we get $\rho_{\phi} =|A|\cdot \rho_{reg}$, which is desired. 
\end{proof}
\medskip

\section{Conjugacy classes and linearity of $\Lambda_{A^{\op}}$}\label{conjugacy and linearity}

Let $A$ be a skew left brace. Then  $(a_1, b_1), (a_2, b_2) \in \Lambda_{A^{\op}}$ are conjugate if there exists $(x,y) \in \Lambda_{A^{\op}}$ such that
\begin{align}
(a_1, b_1)=\;& (x,y) (a_2, b_2) (\lambda^{\op}_{y^{\dagger}}(x^{-1}), y^{\dagger}) \nonumber\\
=\;& (x \lambda^{\op}_y(a_2), y \circ b_2)(\lambda^{\op}_{y^{\dagger}}(x^{-1}), y^{\dagger}) \nonumber\\
=\; & (x \lambda^{\op}_y(a_2) \lambda^{\op}_{y \circ b_2 \circ y^{\dagger}}(x^{-1}), y \circ b_2 \circ y^\dagger).\label{conjrltn}
\end{align}

Let $\psi: \Lambda_{A^{\op}} \rightarrow \Aut(A, \cdot)$ be the group homomorphism defined by 
$$\psi(a,b)(x)=a \lambda^{\op}_b(x) a^{-1}$$
for all $a, b, x \in A$ (see  \cite{LV23}).  For $a \in A$, let $\mathcal{O}_a$ be the orbit of $a$ under the action $\psi$ and let
$$
\Fix(\psi) := \big\{ x \in A \mid \psi(a, b)(x) = x \text{ for all } a, b \in A \big\}.
$$

Note that $(A, \cdot) \times \{1\}$ is a normal subgroup of $\Lambda_{A^{\op}}$. By taking $b_2 = b_1 = 1$ in \eqref{conjrltn}, it follows that elements of the form $(a_1, 1)$ and $(a_2, 1)$ are conjugate in $\Lambda_{A^{\op}}$ if and only if $a_1$ and $a_2$ belong to same orbit under the action $\psi$. The following results are immediate.

\begin{prop}\label{prop conj clases} 
Let $A$ be a skew left brace and $a \in A$. Then the following assertions hold:
	\begin{enumerate}
\item $\Cl^{\Lambda_{A^{\op}}}(a, 1) = \mathcal{O}_a \times \{ 1\}$ for each $(a,1) \in \Lambda_{A^{\op}}$.
\item $ \{1 \}\times \Cl^{(A, \circ)}(a) 	\subseteq \Cl^{\Lambda_{A^{\op}}}(1, a)$. 
\item $\Fix(\psi)=\Z(A, \cdot) \cap \Fix(\lambda)$.
\item $\Ann(A) \trianglelefteq \Fix(\psi)$.
\item $\Ann(A) \times \Ann(A) \trianglelefteq \Ker(\psi) \trianglelefteq \Z( \Lambda_{A^{\op}}).$
	\end{enumerate}
\end{prop}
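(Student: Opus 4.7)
The plan is to treat each of the five assertions by direct computation. For (1), I would specialize the conjugation formula \eqref{conjrltn} to $b_1 = b_2 = 1$: the second coordinate collapses to $y \circ y^{\dagger} = 1$ automatically, and the first becomes $x \lambda^{\op}_y(a_2) x^{-1} = \psi(x,y)(a_2)$. As $(x,y)$ ranges over $\Lambda_{A^{\op}}$, this sweeps out exactly the $\psi$-orbit $\mathcal{O}_{a_2}$, proving $\Cl^{\Lambda_{A^{\op}}}(a, 1) = \mathcal{O}_a \times \{1\}$. For (2), I would substitute $a_2 = 1$ and $x = 1$ into \eqref{conjrltn}; everything outside the second slot disappears and one is left with $(1, y \circ b_2 \circ y^{\dagger})$, immediately giving the claimed inclusion. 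Note that the reverse inclusion need not hold, since conjugation by elements of the form $(x,1)$ can move the first coordinate of $(1,a)$ away from the identity.

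For (3), I would unpack the defining relation $a\lambda^{\op}_b(x)a^{-1} = x$ for all $a, b \in A$: taking $a = 1$ forces $x \in \Fix(\lambda^{\op})$, while $b = 1$ forces $x \in \Z(A, \cdot)$. To identify $\Z(A, \cdot) \cap \Fix(\lambda^{\op})$ with $\Z(A, \cdot) \cap \Fix(\lambda)$, I would use the identity $\lambda^{\op}_b(x) = b\lambda_b(x)b^{-1}$, which collapses to $\lambda_b(x)$ once $x$ is central, so the two intersections coincide; the reverse inclusion is a single substitution. Part (4) is then a one-line affair: $\Ann(A) \subseteq \Z(A, \cdot) \cap \Fix(\lambda) = \Fix(\psi)$ is built into the definition of $\Ann(A)$, and since $\Fix(\psi)$ is contained in the abelian group $\Z(A, \cdot)$, normality of $\Ann(A)$ in $\Fix(\psi)$ is automatic.

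For (5), the first inclusion is a direct three-step check: for $a, b \in \Ann(A)$, the expression $\psi(a,b)(x) = a \cdot b\lambda_b(x)b^{-1} \cdot a^{-1}$ collapses successively using $b \in \Ker(\lambda)$, then $b \in \Z(A, \cdot)$, and finally $a \in \Z(A, \cdot)$ to give $x$, so $\Ann(A) \times \Ann(A) \subseteq \Ker(\psi)$. Normality there is free because Proposition \ref{centersdp}(5) already places $\Ann(A) \times \Ann(A)$ inside $\Z(\Lambda_{A^{\op}})$, hence in the center of every subgroup of $\Lambda_{A^{\op}}$ containing it.

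The main obstacle is the second inclusion, $\Ker(\psi) \trianglelefteq \Z(\Lambda_{A^{\op}})$. Writing $(X, Y) \in \Ker(\psi)$ gives $\lambda^{\op}_Y(u) = X^{-1}uX$ for every $u \in A$, which recovers exactly one of the three centrality conditions extracted from \eqref{cener1}; the remaining two, $Y \in \Z(A, \circ)$ and $X \in \Fix(\lambda^{\op})$, do not obviously follow from that single relation. My plan is to exploit that $\lambda^{\op}: (A, \circ) \to \Aut(A, \cdot)$ is a group homomorphism: composing the relation with $\lambda^{\op}_{Y'}$ for an auxiliary parameter $Y'$ and comparing with $\lambda^{\op}_{Y \circ Y'}$ should either force the missing conditions or isolate a structural restriction on $A$ that must be imposed. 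This is the step I expect to consume the most effort, and if it turns out that the two remaining centrality conditions genuinely cannot be derived from $\psi(X,Y)=\Id$, the cleanest resolution will be to record a hypothesis (for instance, $\lambda^{\op}$ factoring through inner automorphisms in a controlled way) under which (5) holds as stated.
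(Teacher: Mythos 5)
Your handling of parts (1)--(4) and of the first inclusion in (5) is correct and is exactly the computation the paper has in mind (the paper gives no proof at all, declaring the proposition ``immediate'' after deriving \eqref{conjrltn} and defining $\psi$). Two small points there: in (3), the passage from $b\lambda_b(x)b^{-1}$ to $\lambda_b(x)$ uses that $\lambda_b$, being an automorphism of $(A,\cdot)$, preserves $\Z(A,\cdot)$, so $\lambda_b(x)$ is again central --- worth saying explicitly; and your parenthetical in (2) that the reverse inclusion can fail is right (conjugating $(1,a)$ by $(x,1)$ produces $(xax^{-1}a^{-1},a)$ in the trivial brace, which leaves the slice $\{1\}\times A$ whenever $a\notin\Z(A,\cdot)$).

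Your suspicion about the second inclusion in (5) is not merely a difficulty to be worked around: the inclusion $\Ker(\psi)\trianglelefteq\Z(\Lambda_{A^{\op}})$ is false as stated, and no amount of composing with auxiliary $\lambda^{\op}_{Y'}$ will recover it. Take $A$ to be the trivial skew left brace on a non-abelian group $G$. Then $\lambda^{\op}_b$ is conjugation by $b$, so $\psi(a,b)$ is conjugation by $ab$ and $\Ker(\psi)=\{(a,b): ab\in\Z(G)\}$, which contains $(g,g^{-1})$ for every $g\in G$; on the other hand $\Z(\Lambda_{A^{\op}})=\Z(G)\times\Z(G)$, so $\Ker(\psi)\not\subseteq\Z(\Lambda_{A^{\op}})$. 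What is true --- and is surely what was intended --- is the reverse containment $\Z(\Lambda_{A^{\op}})\trianglelefteq\Ker(\psi)$: if $(x,y)\in\Z(\Lambda_{A^{\op}})$, then setting $a=1$ in \eqref{cener1} gives $x\in\Fix(\lambda^{\op})$, after which the relation reduces to $\lambda^{\op}_y(a)=x^{-1}ax$ for all $a$, i.e.\ $\psi(x,y)=\Id$. So the correct chain is $\Ann(A)\times\Ann(A)\trianglelefteq\Z(\Lambda_{A^{\op}})\trianglelefteq\Ker(\psi)$ (the first containment being Proposition \ref{centersdp}(5), and normality being free since all three are normal in $\Lambda_{A^{\op}}$). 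Rather than hunting for a hypothesis under which the printed statement holds, you should record the counterexample and prove the corrected inclusion.
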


\begin{thm}\label{number of rep bound}
If $A$ is a finite skew left brace with $|A|>1$, then $$\max\{k(A, \circ), ~\textrm{the number of orbits of $\psi$} \} \leq k(\Lambda_{A^{\op}}) \leq k(A, \cdot)~k (A, \circ).$$
\end{thm}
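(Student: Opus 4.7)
The plan is to prove the two inequalities separately. The lower bound decomposes into two short, conceptual observations, each isolating a family of $\Lambda_{A^{\op}}$-conjugacy classes arising naturally from the structure of $\Lambda_{A^{\op}}$. The upper bound will be obtained by invoking Gallagher's classical inequality for group extensions.

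For the inequality $k(\Lambda_{A^{\op}}) \geq k(A, \circ)$, I would use the second-coordinate projection $\pi \colon \Lambda_{A^{\op}} \to (A, \circ)$ given by $(a, b) \mapsto b$. Since $\pi$ is a surjective group homomorphism, it carries each conjugacy class of $\Lambda_{A^{\op}}$ onto a conjugacy class of $(A, \circ)$, and every conjugacy class of $(A, \circ)$ arises in this way, so the induced map on conjugacy classes is surjective. For the inequality that $k(\Lambda_{A^{\op}})$ is at least the number of orbits of $\psi$, I would invoke Proposition \ref{prop conj clases}(1), according to which $\Cl^{\Lambda_{A^{\op}}}(a, 1) = \mathcal{O}_a \times \{1\}$; distinct $\psi$-orbits on $A$ therefore yield distinct conjugacy classes of $\Lambda_{A^{\op}}$ just among the elements of the form $(a, 1)$.

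For the upper bound, I would apply Gallagher's inequality: for any normal subgroup $N$ of a finite group $G$, one has $k(G) \leq k(N) \cdot k(G/N)$. Taking $N = (A, \cdot) \times \{1\}$, which is normal in $\Lambda_{A^{\op}}$ as the additive factor of the semi-direct product, we have $N \cong (A, \cdot)$ and $\Lambda_{A^{\op}}/N \cong (A, \circ)$, so $k(N) = k(A, \cdot)$ and $k(\Lambda_{A^{\op}}/N) = k(A, \circ)$, and Gallagher's inequality then delivers the desired upper bound.

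The main obstacle is that the upper bound depends on Gallagher's external result; everything else is immediate from the preceding propositions. If a self-contained argument is preferred, one can stratify the conjugacy classes of $\Lambda_{A^{\op}}$ over the conjugacy classes of $(A, \circ)$ via $\pi$. Each fibre is identified, through the bijection with $A \times \{b\}$, as the set of $\lambda^{\op}_b$-twisted conjugacy classes of $(A, \cdot)$ under the action $n \star a = n a \lambda^{\op}_b(n^{-1})$, and is bounded by the number of $\lambda^{\op}_b$-invariant ordinary conjugacy classes of $(A, \cdot)$ (a classical fact), hence by $k(A, \cdot)$; summing over the $k(A, \circ)$ conjugacy classes of $(A, \circ)$ yields the upper bound.
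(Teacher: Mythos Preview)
Your proposal is correct and matches the paper's approach essentially line for line: the paper cites Gallagher's inequality \cite{PXG70} for the upper bound (with the same choice $N=(A,\cdot)\times\{1\}$) and appeals to Proposition~\ref{prop conj clases} for both lower bounds. Your use of the second-coordinate projection to obtain $k(\Lambda_{A^{\op}})\geq k(A,\circ)$ is a slightly more explicit phrasing of what the paper leaves implicit in its reference to Proposition~\ref{prop conj clases}(2) together with the conjugation formula~\eqref{conjrltn}, but it is the same argument.
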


\begin{proof}
It is well-known \cite{PXG70} that if $G$ is a group and $N \trianglelefteq G$, then $k(G) \leq k(G/N) ~ k(N)$. The assertion now follows from Proposition \ref{prop conj clases}.
\end{proof}

Recall that, a group $G$ is called linear if it admits a  finite-dimensional faithful linear representation over some field.

\begin{defn}
A skew left brace $A$ is said to be linear if the group $\Lambda_{A^{\op}}$ is linear.
\end{defn}

Clearly, if $A$ is linear, then both $(A, \cdot)$ and $(A, \circ)$ are linear. By Proposition \ref{prop:AcongAop}, if $A$ is a trivial skew left brace,  then $\Lambda_{A^{\op}}  \cong \Lambda_{A} \cong  (A, \cdot) \times (A, \cdot)$. In this case, the skew left brace $A$ is linear if and only if the group $(A, \cdot)$ is linear.

\begin{example}
{\rm Let $A$ be a left brace with the additive group being the group $(\mathbb{Z}, +)$ of integers. It is known due to Rump \cite[Proposition 6]{WR08} that $A$ is either the trivial left brace or its multiplicative group is given by
$$m \circ n = (-1)^m n + m$$
for all $m, n \in \mathbb{Z}$.  Clearly, $2\mathbb{Z}$ is a trivial sub-brace of $(\mathbb{Z},+, \circ)$ as well as a normal subgroup of $(\mathbb{Z}, \circ)$.  By the classification of infinite metacyclic groups \cite[Chapter 7]{CEH98}, we get $(\mathbb{Z}, \circ) \cong (\mathbb{Z}, +) \rtimes \mathbb{Z}_2$, which is linear being the infinite dihedral group. Thus, both the additive and the multiplicative groups of $A$ are linear.
\par

Note that $(\mathbb{Z}, +) \rtimes_{\lambda^{\op}} (2\mathbb{Z}, \circ) \cong (\mathbb{Z}, +) \times (\mathbb{Z}, +)$ is a normal subgroup of $\Lambda_{A^{\op}}$, with corresponding quotient group isomorphic to $\mathbb{Z}_2$, which leads to the short exact sequence
	$$0 \rightarrow (\mathbb{Z}, +) \rtimes_{\lambda} (2\mathbb{Z}, \circ) \xrightarrow{i} \Lambda_{A^{\op}}  \xrightarrow{\pi} \mathbb{Z}_2 \rightarrow 0.$$
It is easy to see that if a group admits a finite index subgroup which has a faithful linear representation, then the induced representation of the group is itself faithful.  Since free abelian groups admit faithful linear representations, the preceding short exact sequence implies that $\Lambda_{A^{\op}}$ is also linear.
}
\end{example}

We conclude with the following natural questions.

\begin{question}\label{lingrp}
Let $A$ be an infinite non-trivial skew left brace. 
\begin{enumerate}
\item When does the linearity of $(A, \cdot)$ imply the linearity of $(A, \circ)$ and vice versa? 
\item When does the linearity of both $(A, \cdot)$ and $(A, \circ)$ imply the linearity of $A$? Note that, the semi-direct product of two linear groups need not be linear in general.
\item What properties do linear skew left braces admit?
\end{enumerate}
\end{question}

\begin{ack}
The authors thank IISER Mohali for the institute fellowship during which this work was carried out. After the revision, the paper is solely authored by Rathee and Udeep. The authors also thank Mahender Singh for his valuable earlier contributions to the development of this work.
\end{ack}

\medskip

\end{document}